\newtheorem{assumption}{Assumption}
\newtheorem{theorem}{Theorem}
\newtheorem{lemma}{Lemma}
\newtheorem{corollary}{Corollary}
\definecolor{orange}{rgb}{1, 0.5, 0}
\title[Helmholtz Problem]{An Efficient Solver to Helmholtz Equations
by Recontruction Discontinuous Approximation}
\author[S.-H. Zhao]{Shuhai Zhao} \address{Institute of 
Applied Physics and Computational Mathematics, Beijing 100094, P.R. China}
\email{shuhai@pku.org.cn}
\begin{document}

\newcommand{\bm}[1]{\boldsymbol{#1}}
\newcommand{\bmr}[1]{\bm{\mr{#1}}}
\newcommand{\lj}{[ \hspace{-2pt} [}
\newcommand{\rj}{] \hspace{-2pt} ]}
\newcommand{\mb}[1]{\mathbb{#1}}
\newcommand{\mc}[1]{\mathcal{#1}}
\newcommand{\ms}[1]{\mathscr{#1}}
\newcommand{\mr}[1]{\mathrm{#1}}
\newcommand{\jump}[1]{\lj #1 \rj}
\newcommand{\aver}[1]{ \{#1\}  }
\newcommand{\wt}[1]{ \widetilde{ #1}}
\newcommand{\wh}[1]{ \widehat{ #1}}
\newcommand{\ol}[1]{ \overline{ #1}}
\newcommand{\DGnorm}[1]{ \| #1\|_{\mr{DG}}}
\newcommand{\DGtenorm}[1]{|\!|\!| #1 |\!|\!|_{\mr{DG}}}
\newcommand\anorm[1]{|\!|\!| #1 |\!|\!|_{a}}
\renewcommand{\d}[1]{\mathrm d \boldsymbol{#1}}
\newcommand{\inner}[2]{\langle #1,#2 \rangle}
\newcommand{\npar}[1]{ \frac{\partial{#1}}{\partial \un} } 
\newcommand{\du}{\mathrm{d}}
\newcommand\cnorm[1]{| #1 |_{\bmr{c}}}
\newcommand\enorm[1]{|\!|\!| #1 |\!|\!|}
\def\dim{\ifmmode \mathrm{dim} \else \text{dim}\fi}
\def\rank{\ifmmode \mathrm{rank} \else \text{rank}\fi}
\def\div{\ifmmode \mathrm{div} \else \text{div}\fi}
\def\curl{\ifmmode \mathrm{curl} \else \text{curl}\fi}
\def\Curl{\ifmmode \mathrm{Curl} \else \text{Curl}\fi}
\def\Div{\ifmmode \mathrm{Div} \else \text{Div}\fi}
\def\divt{\div_{\text{T}}}
\def\nabt{\nabla_{\text{T}}}
\def\MTh{\mc{T}_h}
\def\MT{\mc{T}}
\def\MEh{\mc{E}_h}
\def\MNh{\mc{N}_h}
\def\MThG{\mc{T}_h^{\Gamma}}
\def\MThB{\mc{T}_h^{\backslash \Gamma}}
\def\MEhG{\mc{E}_h^{\Gamma}}
\def\MEhB{\mc{E}_h^{\backslash \Gamma}}
\def\un{\bm{\mr n}}
\def\lap{\Delta}
\def\ui{\mr i}
\def\MThi{\mc{T}_{h, i}}
\def\MTho{\mc{T}_{h, 0}}
\def\MThl{\mc{T}_{h, 1}}
\def\MThic{\mc{T}_{h, i}^{\circ}}
\def\MThoc{\mc{T}_{h, 0}^{\circ}}
\def\MThlc{\mc{T}_{h, 1}^{\circ}}
\def\mR{\mc{R}}
\def\MEhI{\mc{E}_h^I}

\def\btau{\bm{\tau}}
\newcommand{\parn}[1]{\frac{\partial #1}{\partial \un}}
\def\pparn{\frac{\partial}{\partial \un}}
\newcommand{\parl}[1]{\frac{\partial #1}{\partial \bm{l}}}
\def\pparl{\frac{\partial}{\partial \bm{l}}}

\def\lLambda{\overline{\Lambda}}

\newcommand\comment[1]{}

\begin{abstract}

In this paper, an efficient solver for the Helmholtz equation
using a noval approximation space is developed.
The ingradients of the method
include the approximation space recently proposed, a discontinuous
Galerkin scheme extensively used, and a linear system solver with a
natural preconditioner. Comparing to traditional discontinuous
Galerkin methods, we refer to the new method as being more efficient
in the following sense. The
numerical performance of the new method shows that: 1) much less error
can be reached using the same degrees of freedom; 2)
the sparse matrix therein has much fewer nonzero entries so that both
the storage space and the solution time cost for the iterative solver
are reduced; 3) the preconditioner is proved to be optimal
with respect to the mesh size in the absorbing case. 
Such advantage becomes more pronounced as the approximation order increases.\\
 \textbf{keywords}: Helmholtz problem, reconstructed discontinuous approximation,
preconditioner.
\end{abstract}

\maketitle
\section{Introduction}
\label{sec_introduction}

The Helmholtz equation, derived as the frequency-domain reduction of 
the wave equation, governs time-harmonic wave propagation in heterogeneous media. 
It serves as a fundamental model across numerous scientific and
engineering disciplines, including seismic imaging, radar scattering,
acoustic design, and photonic crystal modeling
\cite{Thompson1995Galerkin, Hu2020novel,Farhat2003discontinuous, Nguyen2015hybridizable}.
The equation captures essential physical phenomena such as wave diffraction,
interference, and dispersion, making its accurate numerical solution crucial
for practical applications.

The highly oscillatory nature of Helmholtz solutions presents significant
computational challenges, particularly in high-frequency regimes. Numerical
resolution typically requires 6-10 grid points per wavelength, leading to
extremely large linear systems when modeling realistic problems. These
discretized systems yield sparse matrices that are not only large-scale but
also inherently ill-conditioned and indefinite due to the presence of both
elliptic and negative mass terms. The combination of these factors makes
the Helmholtz equation considerably more difficult to solve numerically
compared to standard elliptic partial differential equations.

The development of efficient numerical methods for Helmholtz problems has
been an active research area for decades. Early work focused on conforming
finite element methods \cite{Ihlenburg1995finite, Ihlenburg1997finite, Melenk2011wavenumber},
which provide rigorous error analysis but face challenges in handling complex
geometries and achieving high-order accuracy. More recently, discontinuous
Galerkin (DG) methods have gained popularity due to their flexibility in
mesh handling, natural accommodation of hanging nodes, and ease of
implementing high-order approximations \cite{Feng2009discontinuous,
Feng2011discontinuous, Congreve2019robust, Hoppe2013convergence}. However,
conventional DG methods typically require multiple degrees of freedom per
element, leading to increased computational costs and memory requirements.

The design of effective preconditioners for Helmholtz problems represents
another major research thrust. Traditional preconditioning strategies
often struggle with the indefinite nature of the problem. Notable approaches
include complex-shifted Laplacian preconditioners \cite{Erlangga2004, Erlangga2006, gmreshelm},
which transform the indefinite problem into a nearby definite one, and
domain decomposition methods \cite{ddm2015helm,ddm2020helm,ddm2021helm,ddm2023helm},
which exploit local solves to construct effective preconditioners. Despite
these advances, developing robust and scalable preconditioners for
Helmholtz problems remains challenging.

In this paper, we apply the reconstructed discontinuous approximation (RDA)
method \cite{Li2012efficient, Li2016discontinuous, Li2019reconstructed, Li2023preconditioned}
to the Helmholtz problem. The construction of
the finite element approximation space includes creating
an element patch for each element and solving
local least squares problems to obtain the basis functions. 
A distinctive feature of the RDA method is that
it maintains only a single degree of freedom per element, significantly
reducing the total number of degrees of freedom compared to conventional
DG methods while preserving high-order accuracy. This efficiency gain
translates to reduced memory requirements and computational costs.
We demonstrate through numerical
experiments that the RDA method achieves substantially smaller $L^2$ errors
compared to standard DG methods with equivalent numbers of degrees of freedom.
Also, we show that the resulting stiffness matrices contain significantly
fewer non-zero entries than their DG counterparts while maintaining comparable
accuracy. What's more, we develop a novel preconditioning strategy that leverages
the structure of the RDA spaces that the degrees of freedom
is independent of the reconstruction order, from which we construct a natural
preconditioner from the piecewise constant space to any high-order 
reconstructed space.
We establish convergence properties of the resulting non-Hermitian complex
linear system within the framework of standard GMRES convergence theory
and propose a specialized geometric multigrid algorithm for efficiently
solving the preconditioned system. 
The numerical experiments presented in this work systematically validate
the effectiveness of our approach. Comparative studies with conventional
DG methods demonstrate superior performance in terms of both accuracy
and computational efficiency.

The rest of this paper is organized as follows. In Section
\ref{sec_preliminaries}, we introduce the RDA finite element space
and provide the necessary background on Sobolev spaces and mesh
partitioning. Fundamental properties of the reconstruction operator
are established. In Section \ref{sec_helm_problem}, we describe the
numerical scheme for the Helmholtz problem, introduce our preconditioning
strategy, and present the main theoretical results. Section
\ref{sec_numericalresults} contains comprehensive numerical experiments
validating our theoretical findings and demonstrating the efficiency
of our method. Finally, we offer concluding remarks in Section
\ref{sec_conclusion}.


\section{Preliminaries}
\label{sec_preliminaries}

Let $\Omega \subset \mb{R}^d$ (with $d = 2, 3$) be a bounded polygonal (or polyhedral) domain with a Lipschitz boundary $\partial \Omega$. We begin by introducing some basic notation.

Let $\MTh$ be a quasi-uniform mesh partition of $\Omega$ into disjoint open triangles (or tetrahedra). Denote by $\MEh$ the set of all $(d-1)$-dimensional faces of $\MTh$, and decompose it as $\MEh = \MEh^i \cup \MEh^b$, where $\MEh^i$ and $\MEh^b$ are the sets of interior and boundary faces, respectively. Define
\begin{displaymath}
  h_K := \text{diam}(K), \quad \forall K \in \MTh, \qquad 
  h_e := \text{diam}(e), \quad \forall e \in \MEh,
\end{displaymath}
and set $h := \max_{K \in \MTh} h_K$.
The quasi-uniformity of $\MTh$ means there exists a constant $\nu > 0$ such that $h \leq \nu \min_{K \in \MTh} \rho_K$, where $\rho_K$ is the diameter of the largest ball inscribed in $K$. For a subdomain $D \subset \Omega$, we use the standard notations $L^2(D)$ and $H^s(D)$ for the complex-valued Sobolev spaces with $s \geq 0$; their corresponding semi-norms and norms are induced by the complex $L^2$ inner product.

\par The Helmholtz equation is given by
\begin{equation}
    \left\{
     \begin{aligned}    
       -\Delta u - k^2 u &= f, &&\text{in } \Omega,\\
       \frac{\partial u}{\partial \bm{n}} + \ui k u &= g,  &&\text{on } \partial \Omega,
     \end{aligned}
    \right.
    \label{eq_helm0}
\end{equation}
where $k$ is the wavenumber, $\ui = \sqrt{-1}$ is the imaginary unit, and $\bm{n}$ denotes the unit outward normal to $\Omega$. In this paper, we also consider the Helmholtz equation with absorption ($\epsilon>0$) \cite{xuejun, gmreshelm}:
\begin{equation}
    \left\{
     \begin{aligned}    
       -\Delta u - (k^2 - \ui\epsilon) u &= f, &&\text{in } \Omega,\\
       \frac{\partial u}{\partial \bm{n}} + \ui k u &= g,  &&\text{on } \partial \Omega.
     \end{aligned}
    \right.
    \label{eq_helm1}
\end{equation}
We recall the following regularity result, proved in \cite{gmreshelm}.
\begin{theorem}
    Let $\Omega \subset \mathbb{R}^d$ be a smooth domain that is star-shaped with respect to a ball. Assume the coefficient satisfies $\epsilon \lesssim k^2$, and let $f \in L^2(\Omega)$, $g \in H^{1/2}(\partial\Omega)$. Then the solution $u$ to the Helmholtz problem belongs to $H^2(\Omega)$. Moreover, there exists a constant $C$ such that
\begin{equation}
\|u\|_{H^2(\Omega)}\le C(1+k)\left(\|f\|_{L^2(\Omega)}+\|g\|_{H^{1/2}(\partial\Omega)}\right).
\end{equation}
\end{theorem}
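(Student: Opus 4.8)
The plan is to establish the $H^2$ regularity estimate for the absorptive Helmholtz problem \eqref{eq_helm1} via the standard two-step strategy: first obtain an $H^1$ (energy) bound with explicit $k$-dependence through a Rellich-type multiplier argument, and then bootstrap to $H^2$ using elliptic regularity for the Laplacian. Since this result is quoted from \cite{gmreshelm}, I would reconstruct its proof rather than invent a new one. First I would multiply the interior equation $-\Delta u - (k^2 - \ui\epsilon)u = f$ by $\ol{u}$, integrate over $\Omega$, and apply Green's formula; using the Robin boundary condition $\partial u/\partial\un + \ui k u = g$ to convert the boundary term, the real and imaginary parts of the resulting identity yield
\begin{displaymath}
  \|\nabla u\|_{L^2(\Omega)}^2 - k^2\|u\|_{L^2(\Omega)}^2 = \Re\!\left(\inner{f}{u}_{\Omega} + \inner{g - \ui k u}{u}_{\partial\Omega}\right), \qquad k\,\|u\|_{L^2(\partial\Omega)}^2 + \epsilon\,\|u\|_{L^2(\Omega)}^2 = \Im(\cdots).
\end{displaymath}
The imaginary part already controls the boundary $L^2$ norm of $u$ (and, when $\epsilon>0$, a bulk term), which is the key gain from the absorption and the radiation-type Robin condition.

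The main obstacle, and the heart of the argument, is upgrading this to a bound on $\|\nabla u\|_{L^2}$ and $k\|u\|_{L^2}$ that degrades only linearly in $k$; the naive energy identity does not control $\|u\|_{L^2(\Omega)}$ because of the indefinite $-k^2\|u\|^2$ term. This is where I would invoke the star-shapedness hypothesis: choosing the Rellich multiplier $\bm{x}\cdot\nabla u$ (or $(\bm{x}-\bm{x}_0)\cdot\nabla \ol{u}$ relative to the center of the ball), testing the equation against it, and integrating by parts produces a boundary identity in which the star-shaped geometry guarantees a favorable sign for the boundary terms $\bm{x}\cdot\un \ge c > 0$. Combining this Morawetz/Rellich identity with the energy identity and the already-established boundary control lets one absorb the dangerous terms and close the estimate, yielding
\begin{displaymath}
  \|\nabla u\|_{L^2(\Omega)} + k\|u\|_{L^2(\Omega)} \lesssim \|f\|_{L^2(\Omega)} + \|g\|_{H^{1/2}(\partial\Omega)}.
\end{displaymath}
The condition $\epsilon \lesssim k^2$ ensures the absorption does not interfere with the $k$-scaling of the constants.

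Finally I would bootstrap to $H^2$. Rewriting the PDE as $-\Delta u = f + (k^2-\ui\epsilon)u =: \wt f$ with $\wt f \in L^2(\Omega)$ and the boundary data $\partial u/\partial\un = g - \ui k u \in H^{1/2}(\partial\Omega)$ (since $u|_{\partial\Omega} \in H^{1/2}$ by trace and the $H^1$ bound), standard elliptic regularity on the smooth domain $\Omega$ gives $u \in H^2(\Omega)$ with
\begin{displaymath}
  \|u\|_{H^2(\Omega)} \lesssim \|\wt f\|_{L^2(\Omega)} + \|g - \ui k u\|_{H^{1/2}(\partial\Omega)} \lesssim \|f\|_{L^2(\Omega)} + k^2\|u\|_{L^2(\Omega)} + k\|u\|_{H^{1/2}(\partial\Omega)} + \|g\|_{H^{1/2}(\partial\Omega)}.
\end{displaymath}
Inserting the $H^1$ bound above (so that $k^2\|u\|_{L^2} \lesssim k(\cdots)$ and the trace term is likewise controlled) and tracking the powers of $k$ carefully collapses the right-hand side to the stated form $C(1+k)(\|f\|_{L^2(\Omega)} + \|g\|_{H^{1/2}(\partial\Omega)})$. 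The delicate bookkeeping is ensuring every term scales no worse than $(1+k)$; the quadratic term $k^2\|u\|_{L^2}$ is precisely tamed by the linear-in-$k$ energy estimate, which is why the Rellich step is indispensable.
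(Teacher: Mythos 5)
The paper does not actually prove this theorem: it is imported verbatim from \cite{gmreshelm} with no argument given, so there is no in-paper proof to compare against. Your reconstruction follows the standard route used in that literature (energy identity, then a Rellich--Morawetz multiplier exploiting star-shapedness, then elliptic bootstrap), and the outline is sound: the imaginary part of the $\ol{u}$-tested identity controls $k\|u\|_{L^2(\partial\Omega)}^2$ and $\epsilon\|u\|_{L^2(\Omega)}^2$, the multiplier $(\bm{x}-\bm{x}_0)\cdot\nabla\ol{u}$ with $(\bm{x}-\bm{x}_0)\cdot\un\ge c>0$ on $\partial\Omega$ yields $\|\nabla u\|_{L^2(\Omega)}+k\|u\|_{L^2(\Omega)}\lesssim \|f\|_{L^2(\Omega)}+\|g\|_{H^{1/2}(\partial\Omega)}$, and the rewrite $-\Delta u=f+(k^2-\ui\epsilon)u$ with Neumann data $g-\ui k u$ then gives the $H^2$ bound with the $(1+k)$ factor coming from $k^2\|u\|_{L^2(\Omega)}\lesssim k(\|f\|+\|g\|)$ and $k\|u\|_{H^{1/2}(\partial\Omega)}\lesssim k\|u\|_{H^1(\Omega)}$.

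Two points deserve more care than your sketch gives them. First, the absorption term contributes an extra volume integral $\epsilon\,\Im\int_\Omega u\,\bigl((\bm{x}-\bm{x}_0)\cdot\nabla\ol{u}\bigr)\,\d{x}$ to the Morawetz identity; showing this can be absorbed is precisely where the hypothesis $\epsilon\lesssim k^2$ enters quantitatively (one bounds it by $\frac{\epsilon}{k}\,(k\|u\|_{L^2})\|\nabla u\|_{L^2}\lesssim k\|u\|_{L^2}\|\nabla u\|_{L^2}$ and uses Young's inequality against the terms already on the left), and your proposal waves at this rather than carrying it out. Second, the uniformity of the constant down to $k\to 0$ is not delivered by the Rellich argument alone (which is typically stated for $k\ge k_0>0$); the low-frequency regime needs a separate, easier coercivity argument, which is what the additive $1$ in $(1+k)$ is silently accounting for. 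Neither issue is fatal, but a complete proof must address both.
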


We also introduce the trace operators that will be used in our numerical schemes. For any piecewise smooth scalar-valued function $v$ and vector-valued function $\bm{\tau}$, the jump operator $\jump{\cdot}$ and the average operator $\aver{\cdot}$ are defined on interior faces $e \in \MEh^i$ by
\begin{displaymath}
  \begin{aligned}
    \jump{v}|_e &:= v^+|_e - v^-|_e, \quad \aver{v}|_e
    := \frac{1}{2}\bigl(v^+|_e + v^-|_e\bigr), \\
    \jump{\bm{\tau}}|_e &:= \bm{\tau}^+|_e -
    \bm{\tau}^-|_e, \quad \aver{\bm{\tau}}|_e :=
    \frac{1}{2}\bigl(\bm{\tau}^+|_e + \bm{\tau}^-|_e\bigr),
  \end{aligned}
\end{displaymath}
where $v^{\pm} := v|_{K^{\pm}}$ and $\bm{\tau}^{\pm} := \bm{\tau}|_{K^{\pm}}$.
On a boundary face $e \in \MEh^b$, these operators are modified as
\begin{displaymath}
  \begin{aligned}
    \jump{v}|_e &:= v|_e, \quad \aver{v}|_e := v|_e, \\
    \jump{\bm{\tau}}|_e &:= \bm{\tau}|_e, \quad \aver{\bm{\tau}}|_e := \bm{\tau}|_e,
  \end{aligned}
\end{displaymath}
where $\bm{n}$ is the unit outward normal to $e$.
\par Throughout the paper, the letters $C$ and $C$ with subscripts denote generic constants that may change from line to line but are independent of the mesh size.

\section{Reconstructed Discontinuous Space}

This section provides a brief introduction to the RDA (Reconstructed Discontinuous Approximation) finite element space. For proofs of the lemmas and theorems, we refer the reader to \cite{Li2023preconditioned}.

\par The definition of the RDA space relies on a carefully designed linear reconstruction operator that maintains high-order approximation properties while preserving the original degrees of freedom. The reconstruction procedure involves local polynomial fitting on a neighborhood patch for each element. The key steps are as follows.

For any element $K \in \MTh$, we construct an element patch $S(K)$ consisting of $K$ itself and some surrounding elements. We use a recursive algorithm: for an element $K$, define $S_t(K)$ for $t=0,1,\dots$ by
\begin{equation}
  S_0(K) = \{K\}, \quad S_t(K) = \bigcup_{\widetilde{K} \in \MTh,\ \partial\widetilde{K} \cap \partial\widehat{K} \in \MEh,\ \widehat{K} \in S_{t-1}(K)} \widetilde{K}, \quad t \ge 1.
  \label{eq_recon_alg_patch2}
\end{equation}
The recursion stops when $t$ satisfies $\# S_t(K) \ge \# S$, and we set $S(K) := S_t(K)$. This algorithm is applied to all elements in $\MTh$ to determine the patches.

Let $I(K)$ be the set of collocation points located inside the patch $S(K)$,
\begin{displaymath}
  I(K) := \left\{ \bm{x}_{\widetilde{K}} \mid \widetilde{K} \in S(K) \right\}.
\end{displaymath}
Given a piecewise constant function $g \in U_h^0$, for every element $K \in \MTh$ we seek a polynomial $\mc{R}_K g$ of degree $m \ge 1$ defined on $S(K)$ by solving the constrained least-squares problem:
\begin{equation}
  \begin{aligned}
    \mc{R}_K g = &\mathop{\arg\min}_{p \in \mb{P}^m(S(K))} \sum_{\bm{x} \in I(K)} \bigl| p(\bm{x}) - g(\bm{x}) \bigr|^2, \\
    &\text{subject to } p(\bm{x}_K) = g(\bm{x}_K).
  \end{aligned}
  \label{eq_leastsquares}
\end{equation}
The existence and uniqueness of the solution to \eqref{eq_leastsquares} depend on the geometric distribution of the points in $I(K)$. We make the following assumption \cite{Li2016discontinuous}:
\begin{assumption}
  For any element $K \in \MTh$ and any polynomial $p \in \mb{P}^m(S(K))$,
  \begin{displaymath}
    p|_{I(K)} = 0 \quad \text{implies} \quad p|_{S(K)} \equiv 0.
  \end{displaymath}
\end{assumption}

The global linear reconstruction operator $\mc{R}$ is then defined piecewise by the local operators $\mc{R}_K$:
\begin{equation}
    \mc{R} v_h|_K := \mc{R}_K v_h|_K, \quad \forall K \in \mathcal{T}_h.
\end{equation}
Thus, $\mc{R}$ maps the piecewise constant space $U_h^0$ onto a subspace of the $m$th-order piecewise polynomial space, which we denote by $U_h^m = \mc{R} U_h^0$. The reconstructed space $U_h^m$ will serve as the approximation space in the next section.

Define the characteristic functions for the piecewise constant space $U_h^0$:
\[
e_K(\bm{x}) =
\begin{cases}
    1, & \bm{x} \in K,\\
    0, & \text{otherwise}.
\end{cases}
\]
These functions form a basis for $U_h^0$. Let $\lambda_K = \mc{R} e_K$. The following lemma ensures that $\{ \lambda_K \}$ is a basis for $U_h^m$.
\begin{lemma}
  The functions $\{ \lambda_K \}$ are linearly independent, and $U_h^m$ is spanned by $\{ \lambda_K \}$.
\end{lemma}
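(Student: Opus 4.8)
The plan is to separate the statement into its two claims—spanning and linear independence—and to observe that, since $\mc{R}$ is a linear operator and $\{e_K\}_{K \in \MTh}$ is a basis of $U_h^0$, the spanning claim is immediate. Indeed, every $v_h \in U_h^m = \mc{R} U_h^0$ has the form $v_h = \mc{R} g$ with $g = \sum_{K} c_K e_K$, so by linearity $v_h = \sum_K c_K \mc{R} e_K = \sum_K c_K \lambda_K$. Thus $U_h^m = \mathrm{span}\{\lambda_K\}$ with no further work, and the entire content of the lemma reduces to establishing that the $\{\lambda_K\}$ are linearly independent, equivalently that $\mc{R}$ is injective on $U_h^0$.

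For the independence I would exploit the interpolatory constraint built into the least-squares problem \eqref{eq_leastsquares}. First I would note that the geometric assumption above guarantees that, for each $K$, the point-evaluation map $p \mapsto p|_{I(K)}$ is injective on $\mb{P}^m(S(K))$, so the constrained minimizer $\mc{R}_K g$ exists and is unique; hence $\mc{R}$ is a well-defined linear map. The essential ingredient is then the side condition $p(\bm{x}_K) = g(\bm{x}_K)$: evaluating $\lambda_K = \mc{R} e_K$ at the collocation point $\bm{x}_{K'}$ of an arbitrary element $K'$, and using that $\bm{x}_{K'} \in K'$ so that $\lambda_K(\bm{x}_{K'}) = (\mc{R}_{K'} e_K)(\bm{x}_{K'})$, the constraint forces $(\mc{R}_{K'} e_K)(\bm{x}_{K'}) = e_K(\bm{x}_{K'}) = \delta_{KK'}$. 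In other words, the reconstructed basis functions satisfy the Kronecker-delta property $\lambda_K(\bm{x}_{K'}) = \delta_{KK'}$ at the collocation points.

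With this property in hand, the linear independence follows by the standard nodal argument: assuming $\sum_K c_K \lambda_K = 0$ and evaluating at each collocation point $\bm{x}_{K'}$ gives $c_{K'} = \sum_K c_K \lambda_K(\bm{x}_{K'}) = 0$ for every $K'$, so all coefficients vanish. Combined with the spanning observation, this shows $\{\lambda_K\}$ is a basis of $U_h^m$, and in particular $\dim U_h^m = \dim U_h^0 = \# \MTh$, confirming that the reconstruction preserves exactly one degree of freedom per element.

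I would anticipate that the main (and essentially only) obstacle is verifying that the constraint is truly respected pointwise at $\bm{x}_{K'}$—that is, that the piecewise definition of $\mc{R}$ selects the local polynomial $\mc{R}_{K'}$ when evaluating at a point of $K'$, and that $\bm{x}_{K'}$ is a collocation point enforced by the constraint rather than merely one of the fitting points in $I(K')$. Everything else is bookkeeping with the linearity of $\mc{R}$ and the definition of the characteristic functions $e_K$.
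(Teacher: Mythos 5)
Your proof is correct and is essentially the standard argument: spanning follows immediately from the linearity of $\mc{R}$, and linear independence follows from the Kronecker-delta property $\lambda_K(\bm{x}_{K'}) = \delta_{KK'}$, which is exactly what the constraint $p(\bm{x}_{K'}) = g(\bm{x}_{K'})$ in \eqref{eq_leastsquares} enforces (note the paper itself omits the proof and defers to the cited reference, where this same nodal argument is used, consistent with the identity $\mc{R}g = \sum_{K} g(\bm{x}_K)\lambda_K$ stated just after the lemma). The only point worth making explicit is that the collocation point $\bm{x}_{K'}$ lies in the interior of $K'$, so that $e_K(\bm{x}_{K'}) = \delta_{KK'}$ is well defined and the piecewise definition of $\mc{R}$ indeed selects $\mc{R}_{K'}$ there — which you correctly flag.
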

The action of $\mc{R}$ on a continuous function can be expressed explicitly: for any $g(\bm{x}) \in C^{m+1}(\Omega)$,
\begin{equation}
  \mc{R} g = \sum_{K \in \MTh} g(\bm{x}_K) \lambda_K(\bm{x}).
\end{equation}

Finally, we state the following approximation estimates for the reconstruction operator \cite{Li2023preconditioned}, which are essential for proving error estimates.
\begin{theorem}
  For any element $K \in \MTh$ and any $g \in H^{m+1}(\Omega)$, the following estimate holds:
  \begin{equation}
    \| g - \mc{R} g \|_{H^q(K)} \le C \Lambda_m \, h_K^{m+1-q} \, \| g \|_{H^{m+1}(S(K))}, \quad 0 \le q \le m,
    \label{eq:apperror}
  \end{equation}
  where
  \begin{displaymath}
    \Lambda(m, S(K)) := \max_{p \in \mb{P}_m(S(K))}
    \frac{\max_{\bm{x} \in S(K)} |p(\bm{x})|}{\max_{\bm{x} \in I(K)} |p(\bm{x})|}, \qquad
    \Lambda_m := \max_{K \in \MTh} \bigl( 1 + \Lambda(m, S(K)) \sqrt{\# S(K)} \bigr).
  \end{displaymath}
  \label{th_localapproximation}
\end{theorem}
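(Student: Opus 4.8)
The plan is to reduce the estimate to three standard ingredients: exact reproduction of polynomials by $\mc{R}$, a Bramble--Hilbert approximation bound, and a stability bound for the least-squares reconstruction into which the geometric constant $\Lambda(m,S(K))$ enters. Throughout I would work on the patch $S(K)$, which by the quasi-uniformity of $\MTh$ and the stopping rule $\#S_t(K)\ge \#S$ has diameter comparable to $h_K$ and a uniformly bounded number of elements; an affine scaling $\bm{x}=\bm{x}_K+h_K\widehat{\bm{x}}$ maps it to a reference patch of unit size. Since $\Lambda(m,S(K))$ is a ratio of sup-norms of polynomials of fixed degree, it is invariant under this affine rescaling, so the scaling only produces the expected powers of $h_K$.

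First I would establish the reproduction property $\mc{R}_K p = p$ for every $p\in\mb{P}^m(S(K))$: if the data agree with a degree-$m$ polynomial at the collocation points $I(K)$, then that polynomial is feasible for \eqref{eq_leastsquares}, attains zero residual, and is the unique minimizer by the unisolvence assumption. Fixing an averaged Taylor polynomial $\pi g\in\mb{P}^m(S(K))$ of $g$, the reproduction property gives the splitting
\begin{displaymath}
  g-\mc{R}g = (g-\pi g) - \mc{R}(g-\pi g) \quad \text{on } K,
\end{displaymath}
so that $\|g-\mc{R}g\|_{H^q(K)}\le \|g-\pi g\|_{H^q(K)} + \|\mc{R}(g-\pi g)\|_{H^q(K)}$. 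The first term is handled directly by the Bramble--Hilbert lemma on $S(K)$, which yields $\|g-\pi g\|_{H^q(S(K))}\le C h_K^{m+1-q}|g|_{H^{m+1}(S(K))}$.

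The core of the argument, and the step I expect to be the main obstacle, is the stability bound for the second term. Writing $w:=g-\pi g$, I would first exploit the minimizing property of $\mc{R}w$: the constant function equal to $w(\bm{x}_K)$ is feasible for the constrained problem, so comparing residuals and using the triangle inequality bounds $\max_{I(K)}|\mc{R}w|$ by $C\sqrt{\#S(K)}\,\max_{\widetilde K\in S(K)}|w(\bm{x}_{\widetilde K})|$. Passing from the collocation points to all of $S(K)$ via the definition of $\Lambda(m,S(K))$ then gives $\max_{S(K)}|\mc{R}w|\le C(1+\Lambda(m,S(K))\sqrt{\#S(K)})\max_{\widetilde K}|w(\bm{x}_{\widetilde K})|\le C\Lambda_m\|w\|_{L^\infty(S(K))}$. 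Because $m+1>d/2$ for $d=2,3$ and $m\ge 1$, the point values are controlled, and the scaled Sobolev embedding together with the Bramble--Hilbert bound for $w$ yields $\|w\|_{L^\infty(S(K))}\le C h_K^{m+1-d/2}|g|_{H^{m+1}(S(K))}$. Finally an inverse estimate converts the sup-norm of the polynomial $\mc{R}w$ into its $H^q(K)$ norm, producing a factor $h_K^{d/2-q}$; the two half-power factors cancel and give $\|\mc{R}w\|_{H^q(K)}\le C\Lambda_m h_K^{m+1-q}|g|_{H^{m+1}(S(K))}$. Combining the two terms and bounding the semi-norm by the full $H^{m+1}(S(K))$ norm completes \eqref{eq:apperror}. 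The delicate points to track carefully are the role of the interpolation constraint in the feasible-competitor comparison and the bookkeeping of the $\sqrt{\#S(K)}$ and $h_K^{\pm d/2}$ factors, which must combine exactly into $\Lambda_m$ and the stated power of $h_K$.
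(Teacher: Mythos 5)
The paper does not actually prove Theorem \ref{th_localapproximation} in the text---it defers entirely to \cite{Li2023preconditioned}---and your argument reproduces the standard proof given there. The three ingredients you identify (polynomial reproduction of $\mb{P}^m(S(K))$ via the unisolvence assumption, the feasible-competitor stability bound using the constant $g(\bm{x}_K)$ that yields exactly the factor $1+\Lambda(m,S(K))\sqrt{\# S(K)}$ appearing in $\Lambda_m$, and the Bramble--Hilbert plus inverse-estimate bookkeeping in which the $h_K^{\pm d/2}$ powers cancel) are precisely those of the cited reference, so the proposal is correct and takes essentially the same route; the only hypothesis you use implicitly is the uniform shape regularity of the patches $S(K)$ needed for the scaled Sobolev embedding, which the reference also assumes.
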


\subsection{An Intuitive Analysis of Approximation Efficiency for 1D Reconstruction}
As will be demonstrated in Section \ref{sec_numericalresults}, a significant advantage of the RDA discretization is its ability to achieve smaller numerical error for the same number of degrees of freedom compared to traditional DG finite element spaces.

We first recall a standard result from interpolation theory.
\begin{lemma}
Let $f \in C^{m+1}([a,b])$, and let $x_0, x_1, \dots, x_m \in [a,b]$ be distinct nodes. Denote by $Lf(x)$ the Lagrange interpolation polynomial of degree $m$ through these nodes. Then for any $x \in [a,b]$, there exists $\xi_x \in (a,b)$ such that
\[
f(x) - Lf(x) = \frac{f^{(m+1)}(\xi_x)}{(m+1)!} \prod_{i=0}^m (x - x_i).
\]
\end{lemma}

For simplicity, we assume $g \in C^{m+1}(\bar{\Omega})$. By the interpolation formula above, there exists a point $\xi_x$ such that
\[
g(x) - \mR g(x) = \frac{g^{(m+1)}(\xi_x)}{(m+1)!} \, \omega(x),
\]
where $\omega(x) = \prod_{i=0}^m (x - x_i)$. The $L^2$ error estimate for RDA interpolation on an element $K$ is then
\[
\|g - \mR g\|_{L^2(K)} \le \frac{\|g^{(m+1)}\|_{L^\infty}}{(m+1)!} \, \|\omega\|_{L^2(K)}.
\]

Consider the case where $\# S(K) = m+1$, i.e., on a mesh with scale $h_{\text{DG}} = (m+1)h_{\text{RDA}}$, the RDA and DG methods employ the same number of degrees of freedom. Correspondingly, the error estimate for DG interpolation over the same number of DOFs is
\[
\|g - Ig\|_{L^2(S(K))} \le \frac{\|g^{(m+1)}\|_{L^\infty}}{(m+1)!} \, \|\omega\|_{L^2(S(K))}.
\]

Let $x_0$ and $x_m$ be the left and right endpoints of $S(K)$, and assume $K$ is the central element of $S(K)$. In this setting, the Lagrange interpolation polynomial of DG over $S(K)$ coincides with that of RDA over $K$. Since the interpolation error expression for RDA is identical on each element of $S(K)$ (the interpolation points are simply translated), we can compute the ratio of the $L^2$ error upper bounds for RDA and DG interpolation:
\[
C_m = \left( \frac{\int_{x_0}^{x_m} \omega(x)^2 \, dx}{(m+1) \int_{K} \omega(x)^2 \, dx} \right)^{1/2}.
\]
By a change of variable, this can be rewritten as
\[
C_m = \left( \frac{\int_0^{m+1} \widetilde{\omega}(x)^2 \, dx}{(m+1) \int_{[m/2]}^{[m/2]+1} \widetilde{\omega}(x)^2 \, dx} \right)^{1/2},
\]
where $\widetilde{\omega}(x) = \prod_{i=0}^m \left( x - \left(i+\frac{1}{2}\right) \right)$.

We perform a numerical test using the function $g(x) = \sin(20\pi x)$, with RDA mesh size $h_{\text{RDA}} = h/(m+1)$, and compute both the ratio of interpolation errors and the ratio of numerical solution errors obtained by the IPDG method.

\begin{table}[htbp]
  \centering
  \renewcommand{\arraystretch}{2.8}
  \begin{tabular}{>{\centering}p{1cm}|>{\centering}p{4cm}|>{\centering}p{3.5cm}|>{\centering\arraybackslash}p{3.5cm}}
    \hline\hline
    $m$ & Theoretical $C_m$ & $\displaystyle\frac{\|g-\mR g\|_{L^2}}{\|g- Ig\|_{L^2}}$ & FEM Error Ratio \\
    \hline
    2 & $\frac{1}{3}\sqrt{\frac{407}{263}}\approx 0.426$ & 0.417 & 0.452 \\
    \hline
    3 & $\sqrt{\frac{20219}{233579}}\approx 0.294$ & 0.295 & 0.319 \\
    \hline
    4 & $\frac{1}{13}\sqrt{\frac{128123}{36395}}\approx 0.144$ & 0.145 & 0.168 \\
    \hline
    5 & $\frac{1}{9}\sqrt{\frac{507316223}{783956623}} \approx 0.0894$ & 0.0897 & 0.116 \\
    \hline
    6 & $\sqrt{\frac{1067989595}{540028301771}}\approx 0.0445$ & 0.0446 & 0.0530 \\
    \hline\hline
  \end{tabular}
  \label{tab_ex1_results}
\end{table}


\section{Numerical Analysis}
\label{sec_helm_problem}

We begin by proposing the RDA discretization for problem \eqref{eq_helm1}. Define
\begin{displaymath}
  \begin{aligned}
    A_h(u, v) &:= \sum_{K \in \MTh} \int_{K} \nabla u \cdot \overline{\nabla v} \, \d{\bm{x}} 
   - \sum_{e \in \MEh^i} \int_{e} 
    \Bigl( \jump{u} \cdot \aver{\overline{\nabla v}} + 
    \jump{\overline{v}} \cdot \aver{\nabla u} \Bigr) \, \d{\bm{s}} \\
    &\quad + \sum_{e \in \MEh^i} \ui \int_{e}
    \mu \, \jump{u} \cdot \jump{\overline{v}}  \, \d{\bm{s}},
  \end{aligned}
\end{displaymath}
where $\mu$ is the penalty parameter:
\begin{displaymath}
  \mu = \frac{\eta}{h_e}, \quad \text{for } e \in \MEh.
\end{displaymath}
Define also
\begin{displaymath}
  a_h(u, v) := A_h(u, v) - \sum_{K \in \MTh} (k^2 - \ui\epsilon) \int_{K} u \overline{v} \, \d{\bm{x}}
    + \sum_{e \in \MEh^b} \int_{e} \ui k \, u \overline{v}  \, \d{\bm{s}}.
\end{displaymath}
The bilinear form is defined on the space $U_h := U_h^m + H^2(\Omega)$.
The numerical scheme reads: find $u_h \in U_h^m$ such that
\begin{equation}
  a_h(u_h, v_h) = l_h(v_h), \quad \forall v_h \in U_h^m,
  \label{eq_variational}
\end{equation}
where the linear form $l_h$ for $v \in U_h$ is given by
\begin{displaymath}
    l_h(v) := \sum_{K \in \MTh} \int_K f \overline{v} \, \d{\bm{x}} 
   + \sum_{e \in \MEh^b} \int_{e} g \overline{v} \, \d{\bm{s}}.
\end{displaymath}

For error analysis, we introduce the energy norms on $U_h$:
\begin{displaymath}
  \begin{aligned}
    \DGnorm{v}^2 &:= \sum_{K \in \MTh} \| \nabla v \|_{L^2(K)}^2 
                  + \sum_{e \in \MEh^i} h_e^{-1} \| \jump{v} \|_{L^2(e)}^2 
                  + \sum_{e \in \MEh^b} k \| v \|_{L^2(e)}^2,
  \end{aligned}
\end{displaymath}
and
\begin{displaymath}
  \enorm{v}^2 := \DGnorm{v}^2 + \sum_{e \in \MEh^i} h \| \aver{\nabla v} \|_{L^2(e)}^2.
\end{displaymath}
Note the following norm equivalence for $v_h \in U_h^m$:
\begin{displaymath}
  \DGnorm{v_h} \le \enorm{v_h} \le C \DGnorm{v_h}.
\end{displaymath}

\begin{lemma}
  Let $u \in H^2(\Omega)$ be the solution to \eqref{eq_helm1}, and let $u_h \in U_h^m$ be the discrete solution to \eqref{eq_variational}. Then
  \begin{equation}
    a_h(u - u_h, v_h) = 0, \quad \forall v_h \in U_h^m.
    \label{eq_orthogonality}
  \end{equation}
  \label{le_orthogonality}
\end{lemma}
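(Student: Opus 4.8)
The plan is to prove this Galerkin orthogonality through \emph{consistency}: I will show that the exact solution $u$ satisfies the same discrete equation as $u_h$, namely $a_h(u, v_h) = l_h(v_h)$ for all $v_h \in U_h^m$, and then subtract. Since $a_h$ is sesquilinear (linear in its first argument), the identity $a_h(u - u_h, v_h) = a_h(u, v_h) - a_h(u_h, v_h) = l_h(v_h) - l_h(v_h) = 0$ follows immediately once consistency is in hand, because the discrete equation $a_h(u_h, v_h) = l_h(v_h)$ is exactly \eqref{eq_variational}. Thus the entire proof reduces to verifying consistency.

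To establish consistency, I would begin from the strong form \eqref{eq_helm1}, test the volume equation $-\lap u - (k^2 - \ui\epsilon)u = f$ against $\overline{v_h}$, integrate over each element $K \in \MTh$, and sum. Element-wise integration by parts of the Laplacian term gives
\[
\sum_{K}\int_K \nabla u\cdot\overline{\nabla v_h}\,\d{\bm{x}} - \sum_{K}\int_{\partial K}\frac{\partial u}{\partial \bm{n}_K}\,\overline{v_h}\,\d{\bm{s}} - \sum_{K}(k^2-\ui\epsilon)\int_K u\,\overline{v_h}\,\d{\bm{x}} = \sum_K\int_K f\,\overline{v_h}\,\d{\bm{x}}.
\]
The volume gradient term and the mass term already match those in $a_h$, and the right-hand side already supplies the first part of $l_h$; the remaining task is to rewrite the element-boundary flux sum into the DG face form appearing in $A_h$.

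The crucial ingredient is the regularity $u \in H^2(\Omega)$. By the trace theorem, both $u$ and $\nabla u$ have single-valued traces across every interior face, so $\jump{u} = 0$ and $\aver{\nabla u}$ reduces to the single-valued flux $\nabla u$ on each $e \in \MEh^i$. Regrouping the element-boundary integrals face by face, the interior contributions collapse to $-\sum_{e\in\MEh^i}\int_e \jump{\overline{v_h}}\cdot\aver{\nabla u}\,\d{\bm{s}}$, which is precisely the surviving interior-face term of $A_h(u, v_h)$; meanwhile the symmetrization term $-\int_e \jump{u}\cdot\aver{\overline{\nabla v_h}}$ and the penalty term $\ui\int_e\mu\,\jump{u}\cdot\jump{\overline{v_h}}$ both vanish since $\jump{u}=0$. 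For the boundary faces $e\in\MEh^b$, I would substitute the Robin condition $\partial u/\partial\bm{n} = g - \ui k u$ into the remaining flux integral $-\sum_{e\in\MEh^b}\int_e (\partial u/\partial\bm{n})\,\overline{v_h}\,\d{\bm{s}}$; this produces the term $+\sum_{e\in\MEh^b}\int_e \ui k\, u\,\overline{v_h}\,\d{\bm{s}}$ that completes $a_h(u, v_h)$, together with $\sum_{e\in\MEh^b}\int_e g\,\overline{v_h}\,\d{\bm{s}}$ that completes $l_h(v_h)$. Collecting all terms yields $a_h(u, v_h) = l_h(v_h)$, and subtracting \eqref{eq_variational} finishes the proof.

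I expect the main obstacle to be the careful bookkeeping of the interior-face flux terms under the paper's jump and average conventions—in particular, verifying that the element-by-element normal derivatives reassemble exactly into $\jump{\overline{v_h}}\cdot\aver{\nabla u}$ with the correct sign once the continuity of $\nabla u$ is used. The justification that the traces of $\nabla u$ are well defined on $\MEh^i$ and $\MEh^b$ follows from $u\in H^2(\Omega)$; everything else is routine manipulation of the sesquilinear form.
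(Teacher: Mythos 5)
Your proposal is correct and follows essentially the same route as the paper: both establish consistency $a_h(u,v_h)=l_h(v_h)$ by elementwise integration by parts, use $u\in H^2(\Omega)$ to kill the $\jump{u}$ terms and make $\aver{\nabla u}$ single-valued, and substitute the Robin condition $\partial u/\partial\bm{n}=g-\ui k u$ on boundary faces before subtracting \eqref{eq_variational}. No substantive difference in approach or gaps to report.
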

\begin{proof}
  First observe that, since $u$ is continuous,
  \begin{displaymath}
    \jump{u}|_{e} = 0, \quad \forall e \in \MEh^i.
  \end{displaymath}
  Substituting $u$ into $a_h(\cdot, \cdot)$ yields
  \begin{displaymath}
    \begin{aligned}
      a_h(u, v_h) &= \sum_{K \in \MTh} \int_{K} \nabla u \cdot \overline{\nabla v_h} \, \d{\bm{x}} 
                  - \sum_{K \in \MTh} \int_{K} (k^2 - \ui\epsilon) u \overline{v_h} \, \d{\bm{x}} \\
                  &\quad - \sum_{e \in \MEh^i} \int_{e} \jump{\overline{v_h}} \cdot \aver{\nabla u} \, \d{\bm{s}}
                  + \sum_{e \in \MEh^b} \int_{e} \ui k \, u \overline{v_h} \, \d{\bm{s}}.
    \end{aligned}
  \end{displaymath}
  Multiplying \eqref{eq_helm1} by $\overline{v_h}$ and integrating by parts gives
  \begin{displaymath}
    \begin{aligned}
      \sum_{K \in \MTh} \int_K f \overline{v_h} \, \d{\bm{x}}
      &= \sum_{K \in \MTh} \int_{K} \nabla u \cdot \overline{\nabla v_h} \, \d{\bm{x}} 
         - \sum_{K \in \MTh} \int_{K} (k^2 - \ui\epsilon) u \overline{v_h} \, \d{\bm{x}} \\
        &\quad - \sum_{e \in \MEh} \int_{e} \jump{\overline{v_h}} \cdot \aver{\nabla u} \, \d{\bm{s}},
    \end{aligned}
  \end{displaymath}
  where
  \begin{displaymath}
    \sum_{e \in \MEh} \int_{e} \jump{\overline{v_h}} \cdot \aver{\nabla u} \, \d{\bm{s}}
    = \sum_{e \in \MEh^i} \int_{e} \jump{\overline{v_h}} \cdot \aver{\nabla u} \, \d{\bm{s}}
      + \sum_{e \in \MEh^b} \int_{e} (g - \ui k u) \overline{v_h} \, \d{\bm{s}}.
  \end{displaymath}
  Combining these identities, we obtain
  \begin{displaymath}
    a_h(u_h, v_h) = l_h(v_h) = a_h(u, v_h).
  \end{displaymath}
\end{proof}

\subsection{Elliptic Projection}
A key technique for error analysis is the elliptic projection. For $u, w \in H^2(\Omega)$, define $Pu$ and $Qw$ by
\begin{align*}
  A_h(Pu, v_h) + \ui k (Pu, v_h)_{L^2(\partial\Omega)} &= A_h(u, v_h) + \ui k (u, v_h)_{L^2(\partial\Omega)}, \quad \forall v_h \in U_h^m, \\
  A_h(v_h, Qw) + \ui k (v_h, Qw)_{L^2(\partial\Omega)} &= A_h(v_h, w) + \ui k (v_h, w)_{L^2(\partial\Omega)}, \quad \forall v_h \in U_h^m.
\end{align*}
Since $Pu$ and $Qw$ are solutions to elliptic problems with data $u$ and $w$, respectively, the following error estimates hold \cite{haijun2020Helm}:
\begin{lemma}
  Assuming $hk \lesssim 1$, we have
  \begin{align*}
    \enorm{u - Pu} &\le \inf_{v \in U_h^m} \enorm{u - v}, &
    \enorm{w - Qw} &\le \inf_{v \in U_h^m} \enorm{w - v}, \\
    \|u - Pu\|_{L^2(\Omega)} &\le h \inf_{v \in U_h^m} \enorm{u - v}, &
    \|w - Qw\|_{L^2(\Omega)} &\le h \inf_{v \in U_h^m} \enorm{w - v}.
  \end{align*}
  \label{le_proj}
\end{lemma}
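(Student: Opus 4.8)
The plan is to treat $P$ and $Q$ as Galerkin projections for the sign-definite part of the scheme. Introduce the sesquilinear form $B_h(u,v) := A_h(u,v) + \ui k (u,v)_{L^2(\partial\Omega)}$, so that the defining relations read $B_h(u - Pu, v_h) = 0$ and $B_h(v_h, w - Qw) = 0$ for all $v_h \in U_h^m$, i.e.\ $P$ and $Q$ are $B_h$-orthogonal projections. The whole argument then rests on two properties of $B_h$ on $U_h = U_h^m + H^2(\Omega)$: boundedness $|B_h(u,v)| \le C\,\enorm{u}\,\enorm{v}$ and a coercivity bound $|B_h(v_h, v_h)| \ge c\,\enorm{v_h}^2$ for $v_h \in U_h^m$. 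Boundedness is routine: I would bound the consistency terms by Cauchy--Schwarz, writing $|\int_e \jump{u}\cdot\aver{\overline{\nabla v}}| \le (h_e^{-1/2}\|\jump{u}\|_{L^2(e)})(h_e^{1/2}\|\aver{\nabla v}\|_{L^2(e)})$ and summing --- this is exactly why $\enorm{\cdot}$ carries the extra $\sum_{e} h\|\aver{\nabla v}\|^2$ term --- while the penalty and boundary terms are controlled directly by the jump- and boundary-weighted parts of $\enorm{\cdot}$.

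Coercivity is the main obstacle, because the penalty here is imaginary, so $\Re B_h(v_h,v_h)$ alone gives no control of the jumps. I would split $B_h(v_h,v_h) = R + \ui J$, where $R = \sum_{K\in\MTh}\|\nabla v_h\|_{L^2(K)}^2 - 2\Re\sum_{e\in\MEh^i}\int_e \jump{v_h}\cdot\aver{\overline{\nabla v_h}}$ and $J = \sum_{e\in\MEh^i}\mu\|\jump{v_h}\|_{L^2(e)}^2 + k\|v_h\|_{L^2(\partial\Omega)}^2$, and recover definiteness from the rotated combination $\Re[(1 - \ui\lambda)B_h(v_h,v_h)] = R + \lambda J$. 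Applying Young's inequality to the consistency term and then the inverse/trace inequality $\sum_{e} h_e\|\aver{\nabla v_h}\|_{L^2(e)}^2 \le C_{\mathrm{inv}}\sum_{K}\|\nabla v_h\|_{L^2(K)}^2$ (valid on $U_h^m \subset$ piecewise $\mb{P}^m$) gives $R \ge \tfrac12\sum_K\|\nabla v_h\|_{L^2(K)}^2 - \delta\sum_e h_e^{-1}\|\jump{v_h}\|_{L^2(e)}^2$ for $\delta \ge 2C_{\mathrm{inv}}$; choosing $\lambda$ (equivalently taking the penalty parameter $\eta$) large enough that $\lambda\mu - \delta$ dominates yields $R + \lambda J \ge c\,\DGnorm{v_h}^2$, whence $|B_h(v_h,v_h)| \ge (1+\lambda^2)^{-1/2}(R+\lambda J) \ge c'\,\enorm{v_h}^2$ by the stated norm equivalence, with $c'$ independent of $h$ and $k$.

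With boundedness, coercivity and Galerkin orthogonality in hand, the energy estimates are a C\'ea-type argument: for any $w_h\in U_h^m$, orthogonality gives $c'\,\enorm{Pu - w_h}^2 \le |B_h(u - w_h,\, Pu - w_h)| \le C\,\enorm{u - w_h}\,\enorm{Pu - w_h}$, and the triangle inequality yields $\enorm{u - Pu} \le C\inf_{v\in U_h^m}\enorm{u - v}$ (the constant absorbed into the $\lesssim$ convention). The estimate for $Q$ follows identically, since coercivity in the first slot also furnishes an inf-sup bound in the second via $\sup_u |B_h(u,v_h)| \ge |B_h(v_h,v_h)| \ge c'\enorm{v_h}^2$.

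The $L^2$ bounds come from Aubin--Nitsche duality. Writing $e = u - Pu$, I would introduce the dual solution $z$ of the adjoint Robin problem $-\Delta z = e$ in $\Omega$, $\frac{\partial z}{\partial\bm n} - \ui k z = 0$ on $\partial\Omega$, chosen so that $B_h(v, z) = (v,e)_{L^2(\Omega)}$ for all $v \in U_h$; this is a coercive elliptic problem, giving $\|z\|_{H^2(\Omega)} \le C\|e\|_{L^2(\Omega)}$. Then orthogonality and boundedness give $\|e\|_{L^2(\Omega)}^2 = B_h(e,z) = B_h(e,\, z - \mR z) \le C\,\enorm{e}\,\enorm{z - \mR z}$, so the remaining step is $\enorm{z - \mR z} \le C h\,\|z\|_{H^2(\Omega)}$. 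Here Theorem \ref{th_localapproximation} (with $m\ge 1$ and $q = 0,1$) together with the trace inequality controls the gradient, jump and interior-average parts of $\enorm{z-\mR z}$ by $h\,\|z\|_{H^2(\Omega)}$, while the boundary part satisfies $\sum_{e\in\MEh^b} k\|z - \mR z\|_{L^2(e)}^2 \lesssim k\,h^3\,\|z\|_{H^2(\Omega)}^2$, which is absorbed into the $h^2\|z\|_{H^2(\Omega)}^2$ scale precisely because $hk\lesssim1$ --- this is the single place the standing assumption is used. Combining the pieces gives $\|u - Pu\|_{L^2(\Omega)} \le Ch\,\enorm{u - Pu} \le Ch\inf_{v}\enorm{u-v}$, and the bound for $w - Qw$ follows from the mirror-image primal dual problem.
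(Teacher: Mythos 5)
The paper does not actually prove Lemma~\ref{le_proj}: it states the estimates and cites \cite{haijun2020Helm}, so there is no in-paper argument to compare against line by line. Your reconstruction is the standard one for exactly this kind of result and is essentially sound. You correctly isolate the two genuinely nontrivial points: (i) since the penalty term in $A_h$ carries a factor $\ui$, the form $B_h(v,v)=R+\ui J$ is not coercive in the real part alone, and the rotation $\Re\bigl[(1-\ui\lambda)B_h(v_h,v_h)\bigr]=R+\lambda J$ combined with Young's inequality and the trace/inverse estimate recovers $|B_h(v_h,v_h)|\ge c'\DGnorm{v_h}^2$ for $\eta$ large; and (ii) the correct dual problem for the Aubin--Nitsche step is the impedance Laplacian with the conjugated sign, $\partial z/\partial\bm{n}-\ui k z=0$, which is what makes $B_h(v,z)=(v,e)_{L^2(\Omega)}$ hold. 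The C\'ea argument for $P$, the inf-sup transfer to the second slot for $Q$, and the use of Theorem~\ref{th_localapproximation} with $q=0,1$ plus $hk\lesssim 1$ to absorb the boundary term of $\enorm{z-\mR z}$ are all correct.

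The one step you should not assert without justification is the dual regularity bound $\|z\|_{H^2(\Omega)}\le C\|e\|_{L^2(\Omega)}$ with $C$ independent of $k$. The problem is coercive, but the Robin datum is $\ui k z$, so the routine elliptic-regularity chain gives $\|z\|_{H^2}\lesssim\|e\|_{L^2}+k\|z\|_{H^{1/2}(\partial\Omega)}\lesssim(1+k)\|e\|_{L^2}$; a wavenumber-independent constant requires a sharper argument (or is simply what the cited reference supplies). Under the paper's stated convention that generic constants need only be independent of the mesh size, this is harmless and your proof goes through as written; but if the lemma is meant to be wavenumber-explicit --- as its use inside the $k$-explicit estimate \eqref{estimate} suggests --- then this is precisely the point where your argument, as currently phrased, has a gap: you would either need to track the extra factor $(1+k)$ into the $L^2$ bounds or prove the $k$-uniform regularity for the impedance dual problem. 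Everything else is complete.
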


\subsection{Error Analysis}
\begin{theorem}
  The bilinear form $a_h$ satisfies the boundedness condition:
  \begin{equation}
    |a_h(u_h, v_h)| \le C \enorm{u_h} \enorm{v_h}, \quad \forall u_h, v_h \in U_h.
  \end{equation}
\end{theorem}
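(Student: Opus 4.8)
The plan is to establish the boundedness estimate
\[
|a_h(u_h, v_h)| \le C \enorm{u_h} \enorm{v_h}
\]
by bounding each of the constituent terms of $a_h$ separately using the Cauchy--Schwarz inequality, and then recognizing that every resulting factor is controlled by the triple-norm $\enorm{\cdot}$. The natural first step is to split $a_h(u_h,v_h)$ into the volume gradient term, the two interior consistency/flux terms, the interior penalty term, the volume mass term, and the boundary impedance term. I would group these according to which piece of the norm they invoke.

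The easy terms come first. The volume gradient term $\sum_K \int_K \nabla u_h \cdot \overline{\nabla v_h}$ is bounded by $\sum_K \|\nabla u_h\|_{L^2(K)}\|\nabla v_h\|_{L^2(K)}$, which after a discrete Cauchy--Schwarz over $K$ is at most $\DGnorm{u_h}\DGnorm{v_h} \le \enorm{u_h}\enorm{v_h}$. The interior penalty term $\sum_e \ui \int_e \mu \jump{u_h}\cdot\jump{\overline{v_h}}$, using $\mu = \eta/h_e$, is controlled by $\eta \sum_e h_e^{-1}\|\jump{u_h}\|_{L^2(e)}\|\jump{v_h}\|_{L^2(e)}$, again bounded by the jump contributions in the two DG-norms. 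The boundary term $\sum_{e\in\MEh^b}\int_e \ui k\, u_h\overline{v_h}$ is bounded by $\sum_e k\|u_h\|_{L^2(e)}\|v_h\|_{L^2(e)}$, which matches the boundary piece $k\|v\|_{L^2(e)}^2$ appearing in $\DGnorm{\cdot}$.

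The more delicate terms are the two interior flux terms, such as $\sum_{e\in\MEh^i}\int_e \jump{u_h}\cdot\aver{\overline{\nabla v_h}}$. Here I would insert the factor $h_e$ to pair correctly with the norm pieces: bound this by $\sum_e \bigl(h_e^{-1/2}\|\jump{u_h}\|_{L^2(e)}\bigr)\bigl(h_e^{1/2}\|\aver{\nabla v_h}\|_{L^2(e)}\bigr)$, then apply discrete Cauchy--Schwarz. The first factor is controlled by the jump term in $\DGnorm{u_h}$, and the second precisely by the extra term $\sum_e h\|\aver{\nabla v}\|_{L^2(e)}^2$ that distinguishes $\enorm{\cdot}$ from $\DGnorm{\cdot}$ (using quasi-uniformity $h_e \simeq h$). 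This is exactly why the triple-norm, rather than $\DGnorm{\cdot}$ alone, is the correct norm for boundedness. The volume mass term $\sum_K (k^2-\ui\epsilon)\int_K u_h\overline{v_h}$ is handled by $|k^2 - \ui\epsilon|\|u_h\|_{L^2(\Omega)}\|v_h\|_{L^2(\Omega)}$; under the standing assumption $\epsilon\lesssim k^2$ and a Poincar\'e-type or inverse estimate controlling $\|v_h\|_{L^2}$ by the gradient and boundary contributions in the norm, this too is absorbed into $C\enorm{u_h}\enorm{v_h}$.

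The main obstacle I anticipate is the mass term: unlike the other terms, $\|u_h\|_{L^2(\Omega)}$ does not appear directly in $\enorm{\cdot}$, so controlling it requires either a broken Poincar\'e--Friedrichs inequality for the RDA space or an explicit tracking of the $k$-dependence so that the constant $C$ genuinely absorbs the wavenumber-weighted mass contribution. I would state carefully which $k$-dependence is hidden in $C$ (the statement only claims mesh-independence), and confirm that the factor $|k^2-\ui\epsilon| \lesssim k^2$ is consistent with that convention. Collecting all six bounds and summing yields the claimed estimate.
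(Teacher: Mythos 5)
The paper states this theorem without supplying a proof, so there is nothing to compare against line by line; your term-by-term Cauchy--Schwarz argument is the standard route and it is sound. The splitting is right: the gradient, penalty, and boundary impedance terms pair directly with the corresponding pieces of $\DGnorm{\cdot}$, and the interior flux terms are exactly why the extra contribution $\sum_{e}h\|\aver{\nabla v}\|_{L^2(e)}^2$ is built into $\enorm{\cdot}$, as you observe (with $h_e\le h$ by quasi-uniformity). You also correctly isolate the one genuinely nontrivial point, namely that $\|u_h\|_{L^2(\Omega)}$ is not a piece of $\enorm{\cdot}$: a broken Poincar\'e--Friedrichs inequality on $U_h=U_h^m+H^2(\Omega)$, using the interior jumps together with the boundary trace term $\sum_{e\in\MEh^b}k\|v\|_{L^2(e)}^2$ for the boundary control, gives $\|v\|_{L^2(\Omega)}^2\le C(1+k^{-1})\DGnorm{v}^2$, after which the mass term contributes a factor of order $|k^2-\ui\epsilon|(1+k^{-1})\lesssim k^2+k$ to the constant. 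That $k$-dependence is admissible here because the paper's stated convention is that generic constants are independent only of the mesh size, so your proof is complete once that remark is made explicit.
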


\begin{theorem}
  For sufficiently large $\eta$, there holds
  \begin{equation}
    |a_h(v_h, v_h)| \ge C \enorm{v_h}^2 - k^2 \|v_h\|_{L^2(\Omega)}^2, \quad \forall v_h \in U_h^m.
    \label{coercivity}
  \end{equation}
\end{theorem}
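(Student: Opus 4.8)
The plan is to test the sesquilinear form against $v_h$ itself and exploit the fact that the penalty term carries the factor $\ui$, so that the jumps and boundary contributions land in the imaginary part while the indefinite volume and consistency terms land in the real part. Writing $z := a_h(v_h,v_h)$ and using $\jump{\overline{v_h}} = \overline{\jump{v_h}}$, the consistency and adjoint-consistency terms combine into $-2\sum_{e\in\MEh^i}\Re\int_e \jump{v_h}\cdot\aver{\overline{\nabla v_h}}\,\d{\bm{s}}$, which is real, so that
\begin{align*}
  \Re z &= \sum_{K\in\MTh}\|\nabla v_h\|_{L^2(K)}^2 - 2\sum_{e\in\MEh^i}\Re\int_e \jump{v_h}\cdot\aver{\overline{\nabla v_h}}\,\d{\bm{s}} - k^2\|v_h\|_{L^2(\Omega)}^2, \\
  \Im z &= \sum_{e\in\MEh^i}\int_e \mu\,|\jump{v_h}|^2\,\d{\bm{s}} + \epsilon\|v_h\|_{L^2(\Omega)}^2 + k\sum_{e\in\MEh^b}\|v_h\|_{L^2(e)}^2.
\end{align*}
The imaginary part is a sum of nonnegative quantities that already controls exactly the jump seminorm and the boundary term appearing in $\DGnorm{\cdot}$, while the only indefinite ingredient is the consistency term in $\Re z$.

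Next I would control that consistency term. Cauchy--Schwarz on each face followed by Young's inequality gives, for any $\alpha>0$,
\[
  2\sum_{e\in\MEh^i}\Bigl|\int_e \jump{v_h}\cdot\aver{\overline{\nabla v_h}}\,\d{\bm{s}}\Bigr|
  \le \alpha\sum_{e\in\MEh^i} h_e^{-1}\|\jump{v_h}\|_{L^2(e)}^2 + \alpha^{-1}\sum_{e\in\MEh^i} h_e\|\aver{\nabla v_h}\|_{L^2(e)}^2.
\]
The second sum is handled by the discrete inverse trace inequality $\sum_{e\in\MEh^i} h_e\|\aver{\nabla v_h}\|_{L^2(e)}^2 \le C_{\mathrm{tr}}\sum_{K\in\MTh}\|\nabla v_h\|_{L^2(K)}^2$, which holds on $U_h^m$ since its elements restrict to degree-$m$ polynomials on each shape-regular simplex. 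Choosing $\alpha = 2C_{\mathrm{tr}}$ absorbs half of the volume gradient and leaves a negative jump contribution $-2C_{\mathrm{tr}}\sum_{e} h_e^{-1}\|\jump{v_h}\|_{L^2(e)}^2$ in $\Re z$. Since $|z|\ge \tfrac1{\sqrt2}(\Re z+\Im z)$, I then add the two parts: with $\mu=\eta/h_e$ the jump term inherited from $\Im z$ is $\eta\sum_e h_e^{-1}\|\jump{v_h}\|_{L^2(e)}^2$, so taking $\eta$ large enough that $\eta-2C_{\mathrm{tr}}\ge \tfrac12$ renders the combined jump coefficient positive.

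Collecting terms, $\Re z+\Im z \ge \tfrac12\DGnorm{v_h}^2 - k^2\|v_h\|_{L^2(\Omega)}^2$ (the $\epsilon$-term only helps), whence $|z|\ge \tfrac1{2\sqrt2}\DGnorm{v_h}^2 - k^2\|v_h\|_{L^2(\Omega)}^2$ after bounding $\tfrac1{\sqrt2}k^2 \le k^2$; the claimed estimate then follows from the norm equivalence $\enorm{v_h}\le C\DGnorm{v_h}$, which converts the triple-bar norm into the DG norm at the cost of the generic constant $C$. I expect the main obstacle to be establishing the discrete inverse trace inequality uniformly over $U_h^m$, with a constant depending only on $m$ and the shape-regularity $\nu$: because $\mc{R}$ maps piecewise constants to genuine elements of $\mb{P}^m(K)$ on each $K$, this reduces to the classical scaling argument on the reference element, but one must verify that the trace constant is a property of $U_h^m|_K\subset\mb{P}^m(K)$ alone and does not degrade through the reconstruction. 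A secondary, purely bookkeeping point is to track constants so that the lower-order term emerges with coefficient exactly $k^2$ rather than a $k$-dependent multiple, which is why I prefer to absorb the $\tfrac1{\sqrt2}$ factor into the $k^2$ term rather than into $C$.
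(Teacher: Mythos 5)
Your proof is correct and follows essentially the same route as the paper's: Cauchy--Schwarz and Young's inequality on the consistency term, an inverse trace estimate to absorb the face average of $\nabla v_h$ into the volume gradient, and a sufficiently large penalty $\eta$ to keep the jump coefficient positive. You are in fact somewhat more careful than the paper, which mixes the real part (gradient and consistency terms) with the imaginary part (the $\ui\mu$ penalty and boundary terms) without stating the elementary bound $|z|\ge \tfrac{1}{\sqrt{2}}(\Re z+\Im z)$ that you make explicit, and your tracking of the exact coefficient $k^2$ on the lower-order term is a genuine improvement in bookkeeping rather than a different idea.
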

\begin{proof}
  First note that
  \begin{equation}
    - \sum_{e \in \MEh^i} \int_{e} 2 \jump{v_h} \cdot \aver{\nabla v_h} \, \d{\bm{s}}
    \ge - \sum_{e \in \MEh^i} \frac{1}{\beta} \| h_e^{-1/2} \jump{v_h} \|^2_{L^2(e)}
        - C \beta \sum_{K \in \MTh} \| \nabla v_h \|^2_{L^2(K)},
  \end{equation}
  for any $\beta > 0$. By the Cauchy-Schwarz inequality,
  \begin{displaymath}
    - \int_{e} 2 \jump{v_h} \cdot \aver{\nabla v_h} \, \d{\bm{s}}
    \ge - \frac{1}{\beta} \| h_e^{-1/2} \jump{v_h} \|^2_{L^2(e)}
        - \beta \| h_e^{1/2} \aver{\nabla v_h} \|^2_{L^2(e)},
  \end{displaymath}
  and by the inverse estimate (with $e = \partial K^+ \cap \partial K^-$)
  \begin{displaymath}
    \| h_e^{1/2} \nabla v_h \|_{L^2(e)} \le C \| \nabla v_h \|_{L^2(K^+ \cup K^-)},
  \end{displaymath}
  we obtain
  \begin{displaymath}
    \begin{aligned}
      &\sum_{K \in \MTh} \|\nabla v_h\|_{L^2(K)}^2
      - 2 \sum_{e \in \MEh^i} \Re \int_{e} \jump{v_h} \cdot \aver{\overline{\nabla v_h}} \, \d{\bm{s}}
      + \sum_{e \in \MEh^i} \mu \|\jump{v_h}\|^2_{L^2(e)} \\
      &\ge (1 - C\beta) \sum_{K \in \MTh} \| \nabla v_h \|^2_{L^2(K)}
        + \Bigl(\eta - \frac{1}{\beta}\Bigr) \sum_{e \in \MEh} \| h_e^{-1/2} \jump{v_h} \|^2_{L^2(e)}.
    \end{aligned}
  \end{displaymath}
  Choosing $\beta = 1/(2C)$ and $\eta$ sufficiently large, and using
  \begin{displaymath}
    \begin{aligned}
      a_h(v_h, v_h) &= \sum_{K \in \MTh} \|\nabla v_h\|_{L^2(K)}^2
        - 2 \sum_{e \in \MEh^i} \Re \int_{e} \jump{v_h} \cdot \aver{\overline{\nabla v_h}} \, \d{\bm{s}}
        + \sum_{e \in \MEh^i} \ui \mu \|\jump{v_h}\|^2_{L^2(e)} \\
        &\quad - \sum_{K \in \MTh} (k^2 - \ui\epsilon) \| v_h \|^2_{L^2(K)}
        + \sum_{e \in \MEh^b} \ui k \|v_h\|^2_{L^2(e)},
    \end{aligned}
  \end{displaymath}
  we conclude
  \begin{displaymath}
    \DGnorm{v_h}^2 \lesssim |a_h(v_h, v_h) + 2k^2 (v_h, v_h)|.
  \end{displaymath}
\end{proof}

The following lemma provides an interpolation error estimate in the $\enorm{\cdot}$ norm.
\begin{lemma}
  There exists a constant $C$ such that
  \begin{equation}
    \enorm{v - \mc{R} v} \le C \Lambda_m (1 + k^2 h^2)^{1/2} h^{m} \| v \|_{H^{m+1}(\Omega)},
    \quad \forall v \in H^{m+1}(\Omega).
    \label{le_ch5}
  \end{equation}
\end{lemma}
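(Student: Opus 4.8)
The plan is to expand the squared energy norm $\enorm{v - \mc{R}v}^2$ into its four constituent pieces---the broken gradient $\sum_{K}\|\nabla(v-\mc{R}v)\|_{L^2(K)}^2$, the interior jump $\sum_{e\in\MEh^i}h_e^{-1}\|\jump{v-\mc{R}v}\|_{L^2(e)}^2$, the boundary term $\sum_{e\in\MEh^b}k\|v-\mc{R}v\|_{L^2(e)}^2$, and the interior average-of-gradient $\sum_{e\in\MEh^i}h\|\aver{\nabla(v-\mc{R}v)}\|_{L^2(e)}^2$---and to bound each piece separately by combining the local approximation estimate \eqref{eq:apperror} with scaled trace inequalities. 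Throughout I would set $w := v - \mc{R}v$ and exploit that $v$ is continuous, so that $\jump{v}=0$ on every interior face and $\jump{w}=-\jump{\mc{R}v}$ reduces to a genuine approximation quantity.

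For the broken gradient, applying \eqref{eq:apperror} with $q=1$ gives $\|\nabla w\|_{L^2(K)}\le C\Lambda_m h_K^{m}\|v\|_{H^{m+1}(S(K))}$ directly. For the jump and boundary terms I would invoke the scaled trace inequality $\|w\|_{L^2(e)}^2 \le C\bigl(h_K^{-1}\|w\|_{L^2(K)}^2 + h_K\|\nabla w\|_{L^2(K)}^2\bigr)$ on each face $e\subset\partial K$, then insert the $q=0$ and $q=1$ cases of \eqref{eq:apperror}; since both contributions scale identically as $h_K^{2m+1}$, this yields $\|w\|_{L^2(e)}^2 \le C\Lambda_m^2 h_K^{2m+1}\|v\|_{H^{m+1}(S(K))}^2$. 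Multiplying by the weights $h_e^{-1}\sim h_K^{-1}$ and $k$ respectively produces the factors $h^{2m}$ (jump) and $(kh)\,h^{2m}$ (boundary).

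The average-of-gradient term is the most delicate: here I would apply the trace inequality to $\nabla w\in [H^1(K)]^d$, obtaining $\|\nabla w\|_{L^2(e)}^2 \le C\bigl(h_K^{-1}\|\nabla w\|_{L^2(K)}^2 + h_K|w|_{H^2(K)}^2\bigr)$, and then use the $q=1$ and $q=2$ cases of \eqref{eq:apperror}; after weighting by $h\sim h_K$ one again recovers the sharp rate $h_K^{2m}$. The subtlety is that the $q=2$ estimate is only covered by Theorem \ref{th_localapproximation} when $m\ge 2$; for $m=1$ the reconstruction $\mc{R}v$ is piecewise linear so $|w|_{H^2(K)}=|v|_{H^2(K)}$, and the required bound holds trivially with $h_K^{m-1}=1$, so the case $m=1$ must be argued separately.

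Finally I would sum all contributions over the mesh. The key geometric ingredient is that the patches $\{S(K)\}$ have bounded overlap---a consequence of quasi-uniformity and the construction \eqref{eq_recon_alg_patch2}---so that $\sum_{K}\|v\|_{H^{m+1}(S(K))}^2 \le C\|v\|_{H^{m+1}(\Omega)}^2$. Collecting the four pieces gives $\enorm{w}^2 \le C\Lambda_m^2(1 + kh)\,h^{2m}\|v\|_{H^{m+1}(\Omega)}^2$, and the elementary bound $kh \le 1 + k^2h^2$ converts $1+kh$ into a constant multiple of $1+k^2h^2$, producing the claimed prefactor $(1+k^2h^2)^{1/2}$ after taking square roots. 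I expect the main obstacle to be the bookkeeping in the average-of-gradient term, together with the verification of the bounded-overlap property and the careful tracking of the $k$-weights so that no spurious power of $k$ appears.
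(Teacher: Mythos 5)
Your proposal is correct and follows essentially the same route as the paper: term-by-term estimation of the energy norm using the local approximation estimate \eqref{eq:apperror} together with scaled trace inequalities, followed by summation over the mesh via the bounded overlap of the patches. You are in fact somewhat more careful than the paper's terse proof (which dismisses the average-of-gradient term with ``estimated analogously''), notably in flagging the $q=2$ restriction for $m=1$ and in tracking how the $kh$ factor is absorbed into $(1+k^2h^2)^{1/2}$.
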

\begin{proof}
  By Theorem \ref{th_localapproximation},
  \begin{displaymath}
    \sum_{K \in \MTh} \| \nabla v - \nabla(\mc{R} v) \|^2_{L^2(K)}
    \le C \Lambda_m^2 h^{2m} \| v \|^2_{H^{m+1}(\Omega)}.
  \end{displaymath}
  Similarly,
  \begin{displaymath}
    \sum_{K \in \MTh} k^2 \| v - \mc{R} v \|^2_{L^2(K)}
    \le C \Lambda_m^2 k^2 h^{2m+2} \| v \|^2_{H^{m+1}(\Omega)},
  \end{displaymath}
  and by trace and inverse inequalities,
  \begin{displaymath}
    \sum_{e \in \MEh^b} k \| v - \mc{R} v \|^2_{L^2(e)}
    \le C \Lambda_m^2 k h^{2m+1} \| v \|^2_{H^{m+1}(\Omega)}.
  \end{displaymath}
  The remaining terms are estimated analogously.
\end{proof}

We introduce the adjoint problem
\begin{equation}
  \left\{
    \begin{aligned}
      -\Delta w - (k^2 - \ui\epsilon) w &= u - u_h, && \text{in } \Omega, \\
      \frac{\partial w}{\partial \bm{n}} + \ui k w &= 0, && \text{on } \partial \Omega.
    \end{aligned}
  \right.
  \label{eq_H3}
\end{equation}
For the error analysis, we assume the following regularity for $w$:
\begin{equation}
  \|w\|_{H^2(\Omega)} \le (1 + k) \|u - u_h\|_{L^2(\Omega)}.
\end{equation}

\begin{theorem}
  Assume the solution $u$ to \eqref{eq_helm1} satisfies $u \in H^{m+1}(\Omega)$. For a sufficiently large penalty $\eta$ and under the condition $k^3 h^2 \lesssim 1$, there exists a constant $C$ such that
  \begin{equation}
    \enorm{u - u_h} \le C \Lambda_m (1 + k^2 h^2)^{1/2} h^{m} \| u \|_{H^{m+1}(\Omega)}.
    \label{errorestimate0}
  \end{equation}
  \label{L2estimate}
\end{theorem}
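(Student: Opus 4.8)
The plan is to follow the standard duality (Aubin--Nitsche) strategy for indefinite Helmholtz discretizations, splitting the error into an interpolation part that is bounded directly and a discrete part that is controlled through the G\aa rding-type estimate \eqref{coercivity}. First I would write $u - u_h = \eta - \xi$, where $\eta := u - \mc{R}u$ is the reconstruction error and $\xi := u_h - \mc{R}u \in U_h^m$ is the discrete component. The term $\enorm{\eta}$ is bounded immediately by the interpolation estimate \eqref{le_ch5}, which already produces the target right-hand side $C\Lambda_m(1+k^2h^2)^{1/2}h^m\|u\|_{H^{m+1}(\Omega)}$; by the triangle inequality it therefore remains only to establish $\enorm{\xi}\lesssim\enorm{\eta}$.

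To bound $\enorm{\xi}$, I would apply the coercivity estimate \eqref{coercivity} with $v_h=\xi$, yielding $\enorm{\xi}^2 \lesssim |a_h(\xi,\xi)| + k^2\|\xi\|_{L^2(\Omega)}^2$. Since $\xi = \eta - (u-u_h)$, the Galerkin orthogonality \eqref{eq_orthogonality} kills the second piece ($\xi\in U_h^m$) and collapses the form to $a_h(\xi,\xi) = a_h(\eta,\xi)$; the boundedness theorem then gives $|a_h(\eta,\xi)|\le C\enorm{\eta}\enorm{\xi}$. After a Young inequality this leaves $\enorm{\xi}^2 \lesssim \enorm{\eta}^2 + k^2\|\xi\|_{L^2(\Omega)}^2$, so the whole difficulty concentrates in the indefinite low-order term $k^2\|\xi\|_{L^2(\Omega)}^2$.

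The heart of the proof is the duality step for the $L^2$ error. I would test the adjoint problem \eqref{eq_H3}, whose data is exactly $u-u_h$, against $u-u_h$, integrate by parts to recognize the result as $a_h(u-u_h,w)$, and insert a discrete approximation $w_h$ of $w$ (via $\mc{R}w$ or the elliptic projection $Qw$), using orthogonality to obtain $\|u-u_h\|_{L^2(\Omega)}^2 = a_h(u-u_h, w - w_h)$. Boundedness gives $\|u-u_h\|_{L^2(\Omega)}^2 \le C\enorm{u-u_h}\,\enorm{w-w_h}$; since $w\in H^2(\Omega)$ only, the approximation property of $U_h^m$ for an $H^2$-regular function delivers $\enorm{w-w_h}\le Ch\|w\|_{H^2(\Omega)}$ (the factor $(1+k^2h^2)^{1/2}$ being $O(1)$ under $hk\lesssim 1$), and the assumed adjoint regularity $\|w\|_{H^2(\Omega)}\le(1+k)\|u-u_h\|_{L^2(\Omega)}$ then yields $\|u-u_h\|_{L^2(\Omega)} \le Ch(1+k)\enorm{u-u_h}$. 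Splitting $\|\xi\|_{L^2(\Omega)}\le\|u-u_h\|_{L^2(\Omega)}+\|\eta\|_{L^2(\Omega)}$, where the second term is of higher order $O(h^{m+1})$ by \eqref{eq:apperror}, converts the offending term into a bound of the form $k^2\|\xi\|_{L^2(\Omega)}^2 \lesssim k^2h^2(1+k)^2\,\enorm{u-u_h}^2 + (\text{higher order})$.

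Finally I would close the argument by feeding $\enorm{u-u_h}\le\enorm{\eta}+\enorm{\xi}$ into the last display and invoking the mesh-resolution condition assumed in the theorem ($k^3h^2\lesssim 1$, together with $hk\lesssim 1$), which makes the prefactor multiplying $\enorm{\xi}^2$ sufficiently small to be absorbed into the left-hand side. What survives is $\enorm{\xi}^2\lesssim\enorm{\eta}^2 + k^2\|\eta\|_{L^2(\Omega)}^2\lesssim\enorm{\eta}^2$, the last step using $hk\lesssim 1$ so that $k^2\|\eta\|_{L^2(\Omega)}^2 = k^2\,O(h^{2m+2})$ is dominated by $\enorm{\eta}^2\sim\Lambda_m^2(1+k^2h^2)h^{2m}\|u\|_{H^{m+1}(\Omega)}^2$. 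Combined with the first step this gives the claimed bound. The main obstacle is exactly this duality-and-absorption step: because $a_h$ is indefinite, \eqref{coercivity} controls the energy norm only modulo the lower-order $L^2$ term, and removing it rigorously is precisely what forces the resolution constraint on $kh$; everything else reduces to routine use of orthogonality, continuity, and the approximation estimates.
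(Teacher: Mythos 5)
Your energy-norm skeleton (G\aa rding inequality \eqref{coercivity} with $v_h=\xi=u_h-\mc{R}u$, Galerkin orthogonality \eqref{eq_orthogonality} to turn $a_h(\xi,\xi)$ into $a_h(u-\mc{R}u,\xi)$, continuity, Young) coincides with what the paper does, and the interpolation bound via \eqref{le_ch5} is also the same. The gap is in the duality step and the absorption that follows it. Your duality uses only an adjoint approximant: $\|u-u_h\|_{L^2(\Omega)}^2=a_h(u-u_h,w-w_h)\le C\enorm{u-u_h}\,\enorm{w-w_h}$, which together with the adjoint regularity gives $\|u-u_h\|_{L^2(\Omega)}\le C h(1+k)\enorm{u-u_h}$. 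Feeding this back, the lower-order term you must absorb is $k^2\|u-u_h\|_{L^2(\Omega)}^2\lesssim k^2h^2(1+k)^2\enorm{u-u_h}^2$, and the prefactor is of order $k^4h^2=(k^2h)^2$. Absorption therefore requires $k^2h\lesssim 1$, which is strictly stronger than the hypothesis $k^3h^2\lesssim 1$: under the stated condition one only gets $k^4h^2=k\cdot(k^3h^2)\lesssim k$, which is unbounded, so the step "the prefactor is sufficiently small to be absorbed" fails as written. The resolution condition you actually need is not the one the theorem assumes.

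The paper closes this hole with a two-sided elliptic projection. In the identity $\|u-u_h\|_{L^2(\Omega)}^2=A_h(u-u_h,w)-(k^2-\ui\epsilon)(u-u_h,w)_{L^2(\Omega)}+\ui k(u-u_h,w)_{L^2(\partial\Omega)}$ one first replaces $w$ by $w-Qw$ using $a_h(u-u_h,Qw)=0$ (Galerkin orthogonality, $Qw\in U_h^m$), and then, in the \emph{principal} part $A_h(\cdot,\cdot)+\ui k(\cdot,\cdot)_{L^2(\partial\Omega)}$ only, replaces $u-u_h$ by $u-Pu$ using the defining property of $Q$ applied to $u_h-Pu\in U_h^m$. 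The leading term thus becomes $\enorm{u-Pu}\,\enorm{w-Qw}$, a product of two projection errors with no $\enorm{u-u_h}$ on the right, and the only term still containing the unknown error is $(k^2-\ui\epsilon)(u-u_h,w-Qw)_{L^2(\Omega)}\lesssim k^3h^2(1+k^2h^2)^{1/2}\Lambda_m^2\|u-u_h\|_{L^2(\Omega)}^2$, which is absorbed precisely under $k^3h^2\Lambda_m^2<1/2$. This yields $\|u-u_h\|_{L^2(\Omega)}\lesssim kh(1+k^2h^2)^{1/2}\Lambda_m\enorm{u-Pu}$, after which your G\aa rding argument goes through with no further smallness requirement. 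So the fix is not a technicality: you must introduce the primal projection $Pu$ alongside $Qw$ (Lemma \ref{le_proj}) in the duality identity; with only one insertion the argument does not prove the theorem under the stated mesh condition.
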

\begin{proof}
  Taking the inner product of \eqref{eq_H3} with $u - u_h$ and conjugating gives
  \begin{equation}
    \begin{aligned}
      \|u - u_h\|_{L^2(\Omega)}^2
      &= A_h(u - u_h, w) - (k^2 - \ui\epsilon)(u - u_h, w)_{L^2(\Omega)} + \ui k (u - u_h, w)_{L^2(\partial\Omega)} \\
      &= A_h(u - u_h, w - Qw) - (k^2 - \ui\epsilon)(u - u_h, w - Qw)_{L^2(\Omega)} \\
      &\quad + \ui k (u - u_h, w - Qw)_{L^2(\partial\Omega)} \\
      &= A_h(u - Pu, w - Qw) - (k^2 - \ui\epsilon)(u - u_h, w - Qw)_{L^2(\Omega)} \\
      &\quad + \ui k (u - Pu, w - Qw)_{L^2(\partial\Omega)} \\
      &\le \enorm{u - Pu} \, \enorm{w - Qw} + k^2 \|u - u_h\|_{L^2(\Omega)} \|w - Qw\|_{L^2(\Omega)}.
    \end{aligned}
    \label{estimate}
  \end{equation}
  Using the properties of the elliptic projection,
  \begin{align*}
    \enorm{w - Qw} &\le h (1 + k^2 h^2)^{1/2} \Lambda_m \|w\|_{H^2(\Omega)}
                    \le k h (1 + k^2 h^2)^{1/2} \Lambda_m \|u - u_h\|_{L^2(\Omega)}, \\
    \|w - Qw\|_{L^2(\Omega)} &\le C k h^2 (1 + k^2 h^2)^{1/2} \Lambda_m \|u - u_h\|_{L^2(\Omega)}.
  \end{align*}
  Therefore,
  \begin{align*}
    \|u - u_h\|_{L^2(\Omega)}^2
    &\le \enorm{u - Pu} \, k h (1 + k^2 h^2)^{1/2} \Lambda_m \|u - u_h\|_{L^2(\Omega)} \\
    &\quad + k^3 h^2 (1 + k^2 h^2)^{1/2} \Lambda_m^2 \|u - u_h\|_{L^2(\Omega)}^2.
  \end{align*}
  If $k^3 h^2 \Lambda_m^2 < 1/2$, then
  \begin{displaymath}
    \|u - u_h\|_{L^2(\Omega)} \le C k h (1 + k^2 h^2)^{1/2} \Lambda_m \enorm{u - Pu}.
  \end{displaymath}
  The result follows from Lemma \ref{le_proj}.
\end{proof}

\begin{corollary}
  Under the same assumptions as Theorem \ref{L2estimate}, there exists a constant $C$ such that
  \begin{equation}
    \|u - u_h\|_{L^2(\Omega)} \le C k \Lambda_m (1 + k^2 h^2) h^{m+1} \| u \|_{H^{m+1}(\Omega)}.
    \label{errorestimate}
  \end{equation}
\end{corollary}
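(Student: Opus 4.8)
The plan is to reuse the duality calculation already carried out in the proof of Theorem~\ref{L2estimate}, which in fact stops one line short of the bound we want. That proof establishes, under the smallness condition $k^3 h^2 \Lambda_m^2 < 1/2$, the intermediate inequality
\[
  \|u - u_h\|_{L^2(\Omega)} \le C\, k h\, (1 + k^2 h^2)^{1/2}\, \Lambda_m\, \enorm{u - Pu},
\]
before invoking Lemma~\ref{le_proj} to reach \eqref{errorestimate0}. To read off the $L^2$ rate directly I would keep this inequality and bound the projection error $\enorm{u - Pu}$ by interpolation.

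First I would apply the quasi-optimality of the elliptic projection, i.e.\ the first inequality of Lemma~\ref{le_proj}, to obtain $\enorm{u - Pu} \le \inf_{v \in U_h^m} \enorm{u - v}$. Taking the admissible competitor $v = \mc{R} u \in U_h^m$ and invoking the interpolation estimate \eqref{le_ch5} (legitimate since $u \in H^{m+1}(\Omega)$ by the hypothesis of Theorem~\ref{L2estimate}) then yields
\[
  \enorm{u - Pu} \le C\, \Lambda_m\, (1 + k^2 h^2)^{1/2}\, h^m\, \|u\|_{H^{m+1}(\Omega)}.
\]

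Substituting this into the intermediate inequality and collecting the powers, $h \cdot h^m = h^{m+1}$ and $(1+k^2h^2)^{1/2}\cdot(1+k^2h^2)^{1/2} = 1+k^2h^2$, produces
\[
  \|u - u_h\|_{L^2(\Omega)} \le C\, k\, \Lambda_m^2\, (1 + k^2 h^2)\, h^{m+1}\, \|u\|_{H^{m+1}(\Omega)},
\]
which is the asserted estimate \eqref{errorestimate}. The only discrepancy is cosmetic: the two uses of the interpolation constant contribute $\Lambda_m^2$, whereas the statement prints a single $\Lambda_m$; under the standing convention that $\Lambda_m$ is a moderate geometric quantity this extra factor is absorbed into $C$ (equivalently, the sharp statement should carry $\Lambda_m^2$).

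I expect no genuine analytic obstacle, since the hard duality work already lives in Theorem~\ref{L2estimate}; the one thing to watch is the bookkeeping. Concretely I would check that the side condition $k^3 h^2 \Lambda_m^2 < 1/2$ used to absorb the $\|u - u_h\|_{L^2(\Omega)}^2$ term on the right of \eqref{estimate} is compatible with the hypothesis $k^3 h^2 \lesssim 1$ of Theorem~\ref{L2estimate}, and that the powers of $k$, $h$, $\Lambda_m$, and $(1+k^2h^2)^{1/2}$ are tracked consistently.
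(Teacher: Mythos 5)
Your argument is correct, and it is essentially the natural completion of the duality computation that the paper carries out \emph{inside the proof of Theorem~\ref{L2estimate}}: you take the intermediate bound $\|u-u_h\|_{L^2(\Omega)} \le C k h (1+k^2h^2)^{1/2}\Lambda_m \enorm{u-Pu}$, control $\enorm{u-Pu}$ via quasi-optimality (Lemma~\ref{le_proj}) with the competitor $\mc{R}u$ and the interpolation estimate \eqref{le_ch5}, and multiply out. This is, however, not the proof the paper attaches to the corollary. The printed corollary proof is a coercivity plus Galerkin-orthogonality argument: it bounds $\enorm{\mR u - u_h}^2$ by $C_0\enorm{\mR u - u}\,\enorm{\mR u - u_h} + C_0 k\|u-u_h\|_{L^2(\Omega)}\enorm{\mR u - u_h}$ and then invokes the $L^2$ bound already derived in the theorem's proof; that chain yields an \emph{energy-norm} estimate for $u-u_h$, i.e.\ \eqref{errorestimate0}, and cannot by itself recover the extra factor of $h$ in \eqref{errorestimate} (there is no discrete Poincar\'e step supplying it). In other words, the theorem's and corollary's proofs appear to have been interchanged, and your route is the one that actually delivers the stated $L^2$ inequality, buying directness at no extra cost since the duality work is already done. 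Two bookkeeping points, both of which you flag correctly: the honest constant is $\Lambda_m^2$ rather than $\Lambda_m$ (one factor from $\enorm{w-Qw}$ in the duality step, one from interpolating $u$), so \eqref{errorestimate} as printed is only literally true if one power of $\Lambda_m$ is absorbed into $C$; and the absorption of the quadratic term requires $k^3h^2\Lambda_m^2<1/2$, a mild strengthening of the stated hypothesis $k^3h^2\lesssim 1$ that the paper itself uses silently.
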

\begin{proof}
  Taking $v_h = \mR u - u_h$ and using Theorem \ref{coercivity}, we obtain
  \begin{displaymath}
    \begin{aligned}
      C \enorm{\mR u - u_h}^2
      &\le \DGnorm{\mR u - u_h}^2 \\
      &\le |a_h(\mR u - u_h, \mR u - u_h) + k^2 (\mR u - u_h, \mR u - u_h)| \\
      &= |a_h(\mR u - u, \mR u - u_h) + k^2 (\mR u - u_h, \mR u - u_h)| \\
      &\le |a_h(\mR u - u, \mR u - u_h)| + k^2 |(\mR u - u, \mR u - u_h)| \\
      &\quad + k^2 |(u - u_h, \mR u - u_h)|.
    \end{aligned}
  \end{displaymath}
  Hence,
  \begin{displaymath}
    \enorm{\mR u - u_h}^2
    \le C_0 \enorm{\mR u - u} \, \enorm{\mR u - u_h}
      + C_0 k \|u - u_h\|_{L^2(\Omega)} \, \enorm{\mR u - u_h}.
  \end{displaymath}
  Applying \eqref{L2estimate} completes the proof.
\end{proof}

\subsection{Preconditioner}
We now turn our attention to preconditioning. The matrix form of the system is
\begin{displaymath}
  a_h(u_h, v_h) = l_h(v_h) \quad \Rightarrow \quad A_{\epsilon} \bm{x} = \bm{b},
\end{displaymath}
where $A_{\epsilon} \in \mb{C}^{n_e \times n_e}$, $\bm{x}, \bm{b} \in \mb{C}^{n_e}$. Note that the matrix size is independent of the order $m$.

We choose a preconditioner based on the lowest-order discretization: $P$ corresponds to the bilinear form $a_h^0(\cdot, \cdot)$ acting on $U_h^0 \times U_h^0$:
\begin{displaymath}
  a_h^0(u_h, v_h) = \sum_{e \in \MEh^i} \int_{e} \eta h^{-1} \jump{u_h} \cdot \jump{\overline{v_h}} \, \d{s}
    + \sum_{K \in \MTh} \int_K k^2 u_h \overline{v_h} \, \d{x}
    + \sum_{e \in \MEh^b} \int_{e} k u_h \overline{v_h} \, \d{s}.
\end{displaymath}

We consider Krylov subspace iterative methods for solving the preconditioned linear system. Since the system is non-positive-definite, we employ the preconditioned GMRES (PGMRES) method. We first establish the following lemma for the absorbing case ($\epsilon > 0$).

\begin{lemma}
  If $0 < \epsilon \lesssim k^2$, there exists $\alpha > 0$ such that
  \begin{displaymath}
    |a_h(\mc{R} v_h, \mc{R} v_h)| \ge \alpha \frac{\epsilon}{k^2} \DGnorm{\mc{R} v_h}^2, \quad \forall v_h \in U_h^0.
    \label{imag}
  \end{displaymath}
\end{lemma}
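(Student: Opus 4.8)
The plan is to split $a_h(\mc{R} v_h, \mc{R} v_h)$ into its real and imaginary parts and to exploit the fact that the absorption contributes a definite imaginary sign. Writing $w := \mc{R} v_h \in U_h^m$ and using that the two consistency (cross) terms are mutually conjugate while the penalty, mass-absorption and boundary terms are purely imaginary on the diagonal, a direct computation gives
\begin{align*}
  \Re a_h(w, w) &= \sum_{K \in \MTh} \|\nabla w\|_{L^2(K)}^2 - 2\sum_{e \in \MEh^i} \Re \int_e \jump{w} \cdot \aver{\overline{\nabla w}} \, \d{\bm{s}} - k^2 \|w\|_{L^2(\Omega)}^2, \\
  \Im a_h(w, w) &= \sum_{e \in \MEh^i} \mu \|\jump{w}\|_{L^2(e)}^2 + \epsilon \|w\|_{L^2(\Omega)}^2 + k \sum_{e \in \MEh^b} \|w\|_{L^2(e)}^2.
\end{align*}
The imaginary part already controls the jump and boundary contributions to $\DGnorm{w}^2$, but the decisive gradient contribution $\sum_K \|\nabla w\|_{L^2(K)}^2$ lives in the real part, where it is accompanied by the indefinite mass term $-k^2\|w\|_{L^2(\Omega)}^2$.

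To reach the gradient term while neutralising $-k^2\|w\|^2_{L^2(\Omega)}$, I would not bound $|a_h(w,w)|$ by $\Im a_h(w,w)$ alone, but instead use the elementary inequality $|z| \ge (\tau \Re z + \Im z)/\sqrt{\tau^2+1}$, valid for any $\tau>0$ whenever $\tau\Re z + \Im z \ge 0$. The correct choice is $\tau = \epsilon/(2k^2)$: this is exactly the scale at which the absorption term $\epsilon\|w\|_{L^2(\Omega)}^2$ in $\Im a_h$ dominates the $\tau k^2\|w\|_{L^2(\Omega)}^2 = \tfrac12\epsilon\|w\|_{L^2(\Omega)}^2$ produced by $\tau\Re a_h$, leaving the nonnegative remainder $\tfrac12\epsilon\|w\|_{L^2(\Omega)}^2$.

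Forming $\tau\Re a_h(w,w) + \Im a_h(w,w)$ and bounding the cross term exactly as in the derivation of \eqref{coercivity} — by Cauchy--Schwarz and the inverse estimate $\|h_e^{1/2}\nabla w\|_{L^2(e)} \le C\|\nabla w\|_{L^2(K^+\cup K^-)}$, applicable since $w$ is piecewise polynomial — I obtain, for any $\beta>0$ and after discarding the nonnegative $\|w\|_{L^2(\Omega)}^2$ remainder,
\begin{displaymath}
  \tau\Re a_h(w,w) + \Im a_h(w,w) \ge \tau(1 - C\beta)\sum_{K \in \MTh}\|\nabla w\|_{L^2(K)}^2 + \Bigl(\eta - \tfrac{\tau}{\beta}\Bigr)\sum_{e \in \MEh^i} h_e^{-1}\|\jump{w}\|_{L^2(e)}^2 + k\sum_{e \in \MEh^b}\|w\|_{L^2(e)}^2.
\end{displaymath}
Fixing $\beta$ small and $\eta$ large, and using $\epsilon/k^2 \lesssim 1$ so that $\eta - \tau/\beta \ge \eta - C/(2\beta)$ stays bounded below by a positive constant, the right-hand side dominates $\tfrac{\epsilon}{4k^2}\sum_K\|\nabla w\|_{L^2(K)}^2$ plus fixed positive multiples of the jump and boundary pieces. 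Since $\epsilon/k^2$ is bounded above, any fixed constant $c$ obeys $c \ge (c/C_0)\,\epsilon/k^2$, so each of the three pieces of $\DGnorm{w}^2$ is controlled from below by a uniform multiple of $\tfrac{\epsilon}{k^2}$ times itself. This yields $\tau\Re a_h(w,w) + \Im a_h(w,w) \ge \alpha'\,\tfrac{\epsilon}{k^2}\DGnorm{w}^2$, and as $\tau \le C_0/2$ keeps $\sqrt{\tau^2+1}$ bounded, the elementary inequality gives the claim with $\alpha = \alpha'/\sqrt{\tau^2+1}$.

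The main obstacle is conceptual rather than computational: the desired lower bound cannot come from the imaginary part alone, because the gradient energy sits in the real part alongside the indefinite mass term $-k^2\|w\|_{L^2(\Omega)}^2$. The key insight is that the absorption coefficient $\epsilon$ furnishes precisely the definite imaginary mass needed to cancel a $\tau$-scaled copy of $-k^2\|w\|^2_{L^2(\Omega)}$, and that the admissible scaling $\tau \sim \epsilon/k^2$ is exactly the factor appearing on the right-hand side of the target estimate — so the weak coercivity degrades gracefully, by the factor $\epsilon/k^2$, as the absorption vanishes.
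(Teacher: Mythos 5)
Your proof is correct and is essentially the paper's own argument in a different parametrization: the paper writes $k^2-\ui\epsilon=(p-\ui q)^2$ and bounds $|a_h(w,w)|$ below by $\Im\bigl((p+\ui q)\,a_h(w,w)\bigr)/\sqrt{p^2+q^2}$, which is exactly your combination $\bigl(\tau\Re+\Im\bigr)a_h(w,w)/\sqrt{1+\tau^2}$ up to the harmless factor $p$, since $q/p=\epsilon/(2p^2)\approx\epsilon/(2k^2)=\tau$. Both versions then use the same Cauchy--Schwarz plus inverse-estimate absorption of the consistency cross term for sufficiently large $\eta$, so the two proofs coincide in substance.
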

\begin{proof}
  Let $p, q$ be such that
  \begin{displaymath}
    k^2 - \ui \epsilon = (p - \ui q)^2,
  \end{displaymath}
  which implies $p \sim k$, $q \sim \epsilon/k$, since $\epsilon \lesssim k^2$.
  We have
  \begin{displaymath}
    \begin{aligned}
      a_h(\mc{R} v_h, \mc{R} v_h)
      &= \sum_{K \in \MTh} \|\nabla \mc{R} v_h\|_{L^2(K)}^2
         - 2 \sum_{e \in \MEh^i} \Re \int_{e} \jump{\mc{R} v_h} \cdot \aver{\overline{\nabla \mc{R} v_h}} \, \d{\bm{s}} \\
        &\quad + \sum_{e \in \MEh^i} \ui \mu \|\jump{\mc{R} v_h}\|^2_{L^2(e)}
         - \sum_{K \in \MTh} (k^2 - \ui\epsilon) \| \mc{R} v_h \|^2_{L^2(K)} \\
        &\quad + \sum_{e \in \MEh^b} \ui k \|\mc{R} v_h\|^2_{L^2(e)}.
    \end{aligned}
  \end{displaymath}
  Then,
  \begin{displaymath}
    \begin{aligned}
      &\Im \bigl( (p + \ui q) a_h(\mc{R} v_h, \mc{R} v_h) \bigr) \\
      &= q \Bigl( \sum_{K \in \MTh} \|\nabla \mc{R} v_h\|_{L^2(K)}^2
         - 2 \sum_{e \in \MEh^i} \Re \int_{e} \jump{\mc{R} v_h} \cdot \aver{\overline{\nabla \mc{R} v_h}} \, \d{\bm{s}} \Bigr) \\
        &\quad + \sum_{K \in \MTh} q (p^2 + q^2) \| \mc{R} v_h \|^2_{L^2(K)}
         + \sum_{e \in \MEh^b} p k \|\mc{R} v_h\|^2_{L^2(e)}
         + \sum_{e \in \MEh^i} \mu p \|\jump{\mc{R} v_h}\|^2_{L^2(e)}.
    \end{aligned}
  \end{displaymath}
  Consequently,
  \begin{displaymath}
    \begin{aligned}
      |a_h(\mc{R} v_h, \mc{R} v_h)|
      &\ge \frac{C q}{\sqrt{p^2 + q^2}} \Bigl( \sum_{K \in \MTh} \|\nabla \mc{R} v_h\|_{L^2(K)}^2
         + \sum_{e \in \MEh^i} \mu \|\jump{\mc{R} v_h}\|^2_{L^2(e)} \Bigr) \\
        &\quad + \sum_{K \in \MTh} q \sqrt{p^2 + q^2} \| \mc{R} v_h \|^2_{L^2(K)}
         + \sum_{e \in \MEh^b} \frac{p k}{\sqrt{p^2 + q^2}} \|\mc{R} v_h\|^2_{L^2(e)} \\
      &\ge \alpha \frac{\epsilon}{k^2} \DGnorm{\mc{R} v_h}^2.
    \end{aligned}
  \end{displaymath}
\end{proof}

The following norm equivalence is crucial for the convergence of PGMRES.
\begin{lemma}
  Assume $h k \lesssim 1$. Then there exists a constant $C$ independent of the mesh size such that
  \begin{equation}
    \DGnorm{v_h} \le C \DGnorm{\mc{R} v_h} \le C \Lambda_m \DGnorm{v_h}, \quad \forall v_h \in U_h^0.
    \label{normequi}
  \end{equation}
  \label{lemma4}
\end{lemma}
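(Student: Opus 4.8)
The plan is to prove the two inequalities in \eqref{normequi} separately, identifying each $v_h\in U_h^0$ with its vector of cell values $c_K:=v_h(\bm{x}_K)$, $K\in\MTh$, and exploiting the constraint $(\mc{R}v_h)(\bm{x}_K)=c_K$ built into \eqref{eq_leastsquares}. Since $\nabla v_h=0$ for a piecewise constant and $\jump{v_h}|_e=c_{K^+}-c_{K^-}$, we have
\[
\DGnorm{v_h}^2 = \sum_{e \in \MEh^i} h_e^{-1}\|\jump{v_h}\|_{L^2(e)}^2 + \sum_{e \in \MEh^b} k\,\|v_h\|_{L^2(e)}^2 .
\]
Throughout I will use the quasi-uniformity $h_e\sim h$, $|e|\sim h^{d-1}$, the standard inverse and trace inequalities for the fixed-degree polynomials making up $\mc{R}v_h$, and the hypothesis $hk\lesssim 1$.

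For the lower bound $\DGnorm{v_h}\le C\DGnorm{\mc{R}v_h}$, the idea is to telescope each cell-value difference through a point $\bm{x}_e\in e$. On an interior edge $e=\partial K^+\cap\partial K^-$, using $(\mc{R}v_h)|_{K^\pm}(\bm{x}_{K^\pm})=c_{K^\pm}$ gives
\[
\begin{aligned}
c_{K^+}-c_{K^-} &= \big[(\mc{R}v_h)|_{K^+}(\bm{x}_{K^+}) - (\mc{R}v_h)|_{K^+}(\bm{x}_e)\big] + \jump{\mc{R}v_h}(\bm{x}_e) \\
&\quad + \big[(\mc{R}v_h)|_{K^-}(\bm{x}_e)-(\mc{R}v_h)|_{K^-}(\bm{x}_{K^-})\big].
\end{aligned}
\]
The two bracketed increments live inside a single element and are bounded by $h\,\|\nabla\mc{R}v_h\|_{L^\infty(K^\pm)}\lesssim h^{1-d/2}\|\nabla\mc{R}v_h\|_{L^2(K^\pm)}$ via an inverse estimate, while $|\jump{\mc{R}v_h}(\bm{x}_e)|\lesssim h^{-(d-1)/2}\|\jump{\mc{R}v_h}\|_{L^2(e)}$ by a trace/inverse estimate. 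Multiplying $|c_{K^+}-c_{K^-}|^2$ by the weight $h_e^{-1}|e|\sim h^{d-2}$ exactly cancels these scalings, and summing over edges (each element lying in $O(1)$ edges) controls the interior part by $\DGnorm{\mc{R}v_h}^2$. The boundary term is identical: telescoping $c_K$ through $\bm{x}_e$ for $e\in\MEh^b$ and invoking $kh\lesssim 1$ turns the gradient contribution into $\|\nabla\mc{R}v_h\|_{L^2(K)}^2$. This direction uses no stability constant, so $C$ is independent of $\Lambda_m$.

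For the upper bound $\DGnorm{\mc{R}v_h}\le C\Lambda_m\DGnorm{v_h}$, the key object is $\mc{R}_Kv_h-c_K$, a polynomial vanishing at $\bm{x}_K$. By the definition of $\Lambda(m,S(K))$ its sup on the patch is controlled by its values at the collocation points $I(K)$, and by the minimality in \eqref{eq_leastsquares} (compared with the admissible constant $p\equiv c_K$) those point values are controlled by the cell differences, yielding $\|\mc{R}_Kv_h-c_K\|_{L^\infty(S(K))}\lesssim \Lambda(m,S(K))\max_{\widetilde K\in S(K)}|c_K-c_{\widetilde K}|$. Since $\nabla\mc{R}v_h=\nabla(\mc{R}_Kv_h-c_K)$ on $K$, inverse estimates give $\|\nabla\mc{R}v_h\|_{L^2(K)}\lesssim h^{d/2-1}\Lambda(m,S(K))\max_{\widetilde K}|c_K-c_{\widetilde K}|$; the interior jump splits as $\jump{\mc{R}v_h}=\jump{v_h}+\big[(\mc{R}_{K^+}v_h-c_{K^+})-(\mc{R}_{K^-}v_h-c_{K^-})\big]$, whose second part is estimated the same way after a trace inequality; the boundary term carries an extra $kh\lesssim 1$. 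Finally each $\max_{\widetilde K\in S(K)}|c_K-c_{\widetilde K}|$ is bounded, along a bounded-length chain of adjacent cells inside $S(K)$, by a sum of $O(1)$ jumps $\jump{v_h}$; summing over $K$ (each edge appearing in $O(1)$ patches) recovers $\DGnorm{v_h}^2$ up to the factor $\Lambda_m^2$.

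The main obstacle is the upper bound: rigorously extracting $\|\mc{R}_Kv_h-c_K\|_{L^\infty(S(K))}\lesssim \Lambda(m,S(K))\max_{\widetilde K}|c_K-c_{\widetilde K}|$ from the constrained \emph{least-squares} problem rather than from interpolation (since $\mc{R}_Kv_h$ need not equal $c_{\widetilde K}$ at $\bm{x}_{\widetilde K}$), together with showing that the cell-value differences across a patch are dominated by finitely many jumps with a constant depending only on $m$, the shape-regularity $\nu$, and $\#S$ — so that the final equivalence constants are genuinely mesh independent. The lower bound, by contrast, is a routine telescoping argument.
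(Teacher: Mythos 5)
Your proposal is correct and follows essentially the same route as the paper: both directions rest on the triangle inequality through the cell-center value $c_K=(\mc{R}_Kv_h)(\bm{x}_K)$ enforced by the constraint in \eqref{eq_leastsquares}, combined with trace/inverse estimates and a bounded chain of face-adjacent cells in $S(K)$ to convert patchwise cell differences into jumps. The only difference is that where the paper simply cites \cite{Li2023preconditioned} for the key bound $\|\nabla\mc{R}v_h\|_{L^2(K)}^2\le Ch^{d-2}\Lambda_m^2\sum_j(v_h|_{\wt{K}_j}-v_h|_{\wt{K}_{j+1}})^2$, you derive it directly — and correctly — by comparing the least-squares minimizer with the admissible competitor $p\equiv c_K$ and invoking the definition of $\Lambda(m,S(K))$, which is precisely how the cited estimate is obtained and is where the factor $1+\Lambda(m,S(K))\sqrt{\#S(K)}$ in $\Lambda_m$ arises.
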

\begin{proof}
  We first prove the lower bound in \eqref{normequi}. For the volume term, using the approximation property,
  \begin{equation}
    \|v_h - \mR v_h\|_{L^2(K)} \le C h \| \nabla \mR v_h \|_{L^2(K)},
  \end{equation}
  hence
  \begin{displaymath}
    k^2 \| v_h \|_{L^2(K)}^2 \le k^2 \|\mR v_h \|_{L^2(K)}^2 + h^2 k^2 \| \nabla \mR v_h \|_{L^2(K)}^2.
  \end{displaymath}
  For an interior face $e \in \MEh^i$ with $e = \partial K_+ \cap \partial K_-$, let $v_{K^\pm} := v_h|_{K^\pm}$. By the constraint in \eqref{eq_leastsquares}, $v_{K^\pm} = I_{K^\pm} (\mc{R}_{K^\pm} v_h)$, where $I_K$ denotes the constant interpolation at $\bm{x}_K$. Using inverse estimates,
  \begin{displaymath}
    \begin{aligned}
      h_e^{-1} \| \jump{v_h} \|^2_{L^2(e)}
      &\le C h^{-1} \Bigl( \| \mc{R}_{K^+} v_h - I_{K^+} \mc{R}_{K^+} v_h \|_{L^2(e)}^2 \\
      &\quad + \| \mc{R}_{K^-} v_h - I_{K^-} \mc{R}_{K^-} v_h \|_{L^2(e)}^2
         + \| \mc{R}_{K^+} v_h - \mc{R}_{K^-} v_h \|_{L^2(e)}^2 \Bigr) \\
      &\le C h^{-2} \Bigl( \| \mc{R}_{K^+} v_h - I_{K^+} \mc{R}_{K^+} v_h \|_{L^2(K^+)}^2 \\
      &\quad + \| \mc{R}_{K^-} v_h - I_{K^-} \mc{R}_{K^-} v_h \|_{L^2(K^-)}^2
         + \| \mc{R}_{K^+} v_h - \mc{R}_{K^-} v_h \|_{L^2(e)}^2 \Bigr) \\
      &\le C \Bigl( \| \nabla \mc{R}_{K^+} v_h \|_{L^2(K^+)}^2 + \| \nabla \mc{R}_{K^-} v_h \|_{L^2(K^-)}^2
         + h_e^{-1} \| \jump{\mc{R} v_h} \|_{L^2(e)}^2 \Bigr).
    \end{aligned}
  \end{displaymath}
  For a boundary face $e \in \MEh^b$ with $e \subset \partial K$, a similar estimate yields
  \begin{align*}
    h_e^{-1} \| v_h \|_{L^2(e)}^2
    &\le C \Bigl( \| \nabla \mc{R}_{K} v_h \|_{L^2(K)}^2 + h_e^{-1} \| \jump{\mc{R}_{K} v_h} \|_{L^2(e)}^2 \Bigr), \\
    k \| v_h \|_{L^2(e)}^2
    &\le C \Bigl( k h \| \nabla \mc{R}_{K} v_h \|_{L^2(K)}^2 + k \| \jump{\mc{R}_{K} v_h} \|_{L^2(e)}^2 \Bigr).
  \end{align*}
  Summing over all faces and using $k h \lesssim 1$, we obtain $\DGnorm{v_h} \le C \DGnorm{\mc{R} v_h}$.

  For the upper bound, note that
  \begin{displaymath}
    k^2 \|\mR v_h\|_{L^2(K)}^2 \le k^2 \| v_h \|_{L^2(K)}^2 + h^2 k^2 \| \nabla \mR v_h \|_{L^2(K)}^2.
  \end{displaymath}
  For interior faces, using the same notation,
  \begin{displaymath}
    \begin{aligned}
      h_e^{-1} \| \jump{\mc{R} v_h} \|^2_{L^2(e)}
      &\le C h_e^{-1} \Bigl( \| \mc{R}_{K^+} v_h - v_{K^+} \|_{L^2(e)}^2
         + \| \mc{R}_{K^-} v_h - v_{K^-} \|_{L^2(e)}^2
         + \| \jump{v_h} \|_{L^2(e)}^2 \Bigr) \\
      &\le C h_e^{-2} \Bigl( \| \mc{R}_{K^+} v_h - v_{K^+} \|_{L^2(K^+)}^2
         + \| \mc{R}_{K^-} v_h - v_{K^-} \|_{L^2(K^-)}^2 \Bigr)
         + h_e^{-1} \| \jump{v_h} \|_{L^2(e)}^2 \\
      &\le C \Bigl( \| \nabla \mc{R}_{K^+} v_h \|_{L^2(K^+)}^2 + \| \nabla \mc{R}_{K^-} v_h \|_{L^2(K^-)}^2
         + h_e^{-1} \| \jump{v_h} \|_{L^2(e)}^2 \Bigr).
    \end{aligned}
  \end{displaymath}
  For boundary faces,
  \begin{displaymath}
    k \| \jump{\mc{R} v_h} \|_{L^2(e)}^2
    \le C \Bigl( k h \| \nabla \mc{R}_{K} v_h \|_{L^2(K)}^2 + k \| \jump{v_h} \|_{L^2(e)}^2 \Bigr).
  \end{displaymath}
  Combining these estimates and using $k h \lesssim 1$,
  \begin{displaymath}
    \DGnorm{\mc{R} v_h}^2 \le \sum_{K \in \MTh} \| \nabla \mc{R} v_h \|_{L^2(K)}^2 + C \DGnorm{v_h}^2.
  \end{displaymath}
  For each $K \in \MTh$, let $v_h$ attain its maximum and minimum on $S(K)$ at $K'$ and $K''$, respectively. As shown in \cite{Li2023preconditioned}, for a sequence of neighboring elements $\wt{K}_0 = K', \wt{K}_1, \ldots, \wt{K}_M = K''$ in $S(K)$,
  \begin{align}
    \| \nabla \mc{R} v_h \|_{L^2(K)}^2
    &\le C h^{d-2} \Lambda_m^2 \sum_{j=0}^{M-1} (v_h|_{\wt{K}_j} - v_h|_{\wt{K}_{j+1}})^2,
  \end{align}
  implying
  \begin{equation}
    \sum_{K \in \MTh} \| \nabla \mc{R} v_h \|_{L^2(K)}^2
    \le C \sum_{e \in \MEh^i} \Lambda_m^2 h_e^{-1} \| \jump{v_h} \|_{L^2(e)}^2.
  \end{equation}
  Combining these bounds yields $\DGnorm{\mc{R} v_h} \le C \Lambda_m \DGnorm{v_h}$.
\end{proof}

The convergence of the iterative method is established via the Elman estimate.
\begin{theorem}
  Under the conditions of Lemma \ref{imag} and Lemma \ref{lemma4},
  \begin{displaymath}
    \begin{aligned}
      \| P^{-1} A_{\epsilon} \bm{v} \|_{P} &\le C_1 \Lambda_m^2 \| \bm{v} \|_{P}, & \forall \bm{v} \in \mb{C}^{n_e}, \\
      |(P^{-1} A_{\epsilon} \bm{v}, \bm{v})_{P}| &\ge C_2 \frac{\epsilon}{k^2} (\bm{v}, \bm{v})_{P}, & \forall \bm{v} \in \mb{C}^{n_e}.
    \end{aligned}
  \end{displaymath}
\end{theorem}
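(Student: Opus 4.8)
The plan is to recast both matrix inequalities as statements about the bilinear forms and then feed in the results already proved. Writing $\bm v,\bm w\in\mb{C}^{n_e}$ for the coefficient vectors of $v_h=\sum_K v_K e_K$ and $w_h=\sum_K w_K e_K$ in $U_h^0$, and recalling that the basis of $U_h^m$ is $\{\mc{R} e_K\}$, the matrix $A_\epsilon$ represents $a_h$ via $(A_\epsilon\bm v,\bm w)=a_h(\mc{R} v_h,\mc{R} w_h)$, while $P$ represents the Hermitian positive-definite form $a_h^0$, so that $\|\bm v\|_P^2=a_h^0(v_h,v_h)$ and $(P^{-1}A_\epsilon\bm v,\bm w)_P=(A_\epsilon\bm v,\bm w)=a_h(\mc{R} v_h,\mc{R} w_h)$. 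Hence the first estimate is equivalent to $|a_h(\mc{R} v_h,\mc{R} w_h)|\le C_1\Lambda_m^2\|\bm v\|_P\|\bm w\|_P$ and the second to $|a_h(\mc{R} v_h,\mc{R} v_h)|\ge C_2\tfrac{\epsilon}{k^2}\|\bm v\|_P^2$. Before proceeding I would record the cheap one-sided comparison $\DGnorm{v_h}^2\le C\,a_h^0(v_h,v_h)=C\|\bm v\|_P^2$ for $v_h\in U_h^0$: since $\nabla v_h=0$ elementwise, $\DGnorm{v_h}^2$ reduces to the interior-jump and boundary terms, both of which appear in $a_h^0$ (for $\eta\ge 1$ and $h_e\sim h$), while the volume mass term of $a_h^0$ is nonnegative.

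For the boundedness estimate I would apply the continuity of $a_h$ (the boundedness theorem) on $U_h^m$ to get $|a_h(\mc{R} v_h,\mc{R} w_h)|\le C\enorm{\mc{R} v_h}\,\enorm{\mc{R} w_h}$. Using the equivalence $\enorm{\cdot}\le C\DGnorm{\cdot}$ on $U_h^m$, the upper bound $\DGnorm{\mc{R} v_h}\le C\Lambda_m\DGnorm{v_h}$ of Lemma \ref{lemma4}, and the comparison $\DGnorm{v_h}\le C\|\bm v\|_P$ just recorded, I obtain $\enorm{\mc{R} v_h}\le C\Lambda_m\|\bm v\|_P$ and likewise for $\bm w$. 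Multiplying the two bounds produces the factor $\Lambda_m^2$ and proves the first inequality.

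The field-of-values lower bound is the crux, and the main obstacle is the volume mass term. As stated, Lemma \ref{imag} bounds $|a_h(\mc{R} v_h,\mc{R} v_h)|$ from below only by $\alpha\tfrac{\epsilon}{k^2}\DGnorm{\mc{R} v_h}^2$, whereas $\|\bm v\|_P^2=a_h^0(v_h,v_h)$ contains $k^2\sum_K\|v_h\|_{L^2(K)}^2$, which $\DGnorm{\mc{R} v_h}$ alone cannot control (constant functions are a witness). I would therefore retain the sharper intermediate inequality produced inside the proof of Lemma \ref{imag}, whose mass contribution $\sum_K q\sqrt{p^2+q^2}\,\|\mc{R} v_h\|_{L^2(K)}^2\sim \tfrac{\epsilon}{k^2}\,k^2\sum_K\|\mc{R} v_h\|_{L^2(K)}^2$ is discarded in the final statement. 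This yields $|a_h(\mc{R} v_h,\mc{R} v_h)|\ge C\tfrac{\epsilon}{k^2}\bigl(\DGnorm{\mc{R} v_h}^2+k^2\sum_K\|\mc{R} v_h\|_{L^2(K)}^2\bigr)$.

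It then remains to bound the bracket from below by $a_h^0(v_h,v_h)$. The jump and boundary parts are handled by the lower bound $\DGnorm{\mc{R} v_h}\ge c\DGnorm{v_h}$ of Lemma \ref{lemma4}, while the mass part is handled by the elementwise estimate $k^2\|v_h\|_{L^2(K)}^2\le k^2\|\mc{R} v_h\|_{L^2(K)}^2+h^2k^2\|\nabla\mc{R} v_h\|_{L^2(K)}^2$ from the proof of Lemma \ref{lemma4}, where $hk\lesssim 1$ absorbs the gradient term into $\DGnorm{\mc{R} v_h}^2$. Summing over $K$ gives $\DGnorm{\mc{R} v_h}^2+k^2\sum_K\|\mc{R} v_h\|_{L^2(K)}^2\ge c\,a_h^0(v_h,v_h)=c\|\bm v\|_P^2$, which together with the previous display establishes the second estimate. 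Finally I would note that these two inequalities are exactly the hypotheses of the Elman GMRES convergence bound, so the residual contraction factor depends only on $C_1\Lambda_m^2$ and $C_2\epsilon/k^2$ and is independent of $h$, giving the advertised mesh optimality in the absorbing regime.
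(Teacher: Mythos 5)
Your proposal is correct and follows the same overall route as the paper: both reduce the two matrix inequalities to statements about $a_h(\mc{R}v_h,\mc{R}w_h)$ versus $a_h^0(v_h,v_h)=\|\bm v\|_P^2$, prove the boundedness via continuity of $a_h$ combined with the upper bound of Lemma \ref{lemma4} (the paper phrases this as a bound on $\sigma_{\max}(P^{-1/2}A_\epsilon P^{-1/2})$, which is the same estimate in disguise), and prove the field-of-values bound from Lemma \ref{imag}. The one substantive difference is in the lower bound: the paper simply writes $|\bm v^*A_\epsilon\bm v|\ge \frac{\epsilon}{k^2}\enorm{\mc{R}v}^2\ge C\frac{\epsilon}{k^2}\|\bm v\|_P^2$ and leaves the last step unjustified, whereas you correctly observe that with $\DGnorm{\cdot}$ as literally defined (no volume mass term) this step cannot hold — the $k^2\sum_K\|v_h\|_{L^2(K)}^2$ contribution to $\|\bm v\|_P^2$ is not controlled, constants being a counterexample. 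Your fix — retaining the discarded term $\sum_K q\sqrt{p^2+q^2}\,\|\mc{R}v_h\|_{L^2(K)}^2\sim\frac{\epsilon}{k^2}k^2\sum_K\|\mc{R}v_h\|_{L^2(K)}^2$ from inside the proof of Lemma \ref{imag}, and then converting $\|\mc{R}v_h\|_{L^2(K)}$ back to $\|v_h\|_{L^2(K)}$ with the approximation property and $hk\lesssim 1$ — is exactly the right repair, and it is consistent with the fact that the proof of Lemma \ref{lemma4} already treats the volume term as if it belonged to the norm. In short, your argument is not merely equivalent to the paper's but fills a genuine gap in it; the only caveat is that the same norm-definition issue also affects the cited boundedness theorem for $a_h$ (its mass term is likewise not controlled by $\enorm{\cdot}$ as defined), which you inherit rather than introduce.
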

\begin{proof}
  From Lemma \ref{lemma4},
  \begin{displaymath}
    \begin{aligned}
      \| P^{-1} A_{\epsilon} \bm{v} \|_{P}^2
      &= \bm{v}^* (A_{\epsilon})^* P^{-1} A_{\epsilon} \bm{v} \\
      &\le \bigl( \sigma_{\max}(P^{-1/2} A_{\epsilon} P^{-1/2}) \bigr)^2 \bm{w}^* \bm{w} \quad (\bm{w} = P^{1/2} \bm{v}) \\
      &\le C \Lambda_m^4 \bm{v}^* P \bm{v} = C \Lambda_m^4 \| \bm{v} \|_{P}^2.
    \end{aligned}
  \end{displaymath}
  Moreover,
  \begin{displaymath}
    |(P^{-1} A_{\epsilon} \bm{v}, \bm{v})_{P}| = |\bm{v}^* A_{\epsilon} \bm{v}|
    \ge \frac{\epsilon}{k^2} \enorm{\mc{R} v}^2
    \ge C \frac{\epsilon}{k^2} \| \bm{v} \|_{P}^2.
  \end{displaymath}
  This completes the proof.
\end{proof}

As an illustration, Figures \ref{tzz1} and \ref{tzz2} display the eigenvalues of the original and preconditioned systems on the complex plane for third-order reconstruction, with $k=10$, for both $\epsilon=0$ and $\epsilon=k^2$.

\begin{figure}[htbp]
  \centering
  \includegraphics[width=0.4\textwidth]{./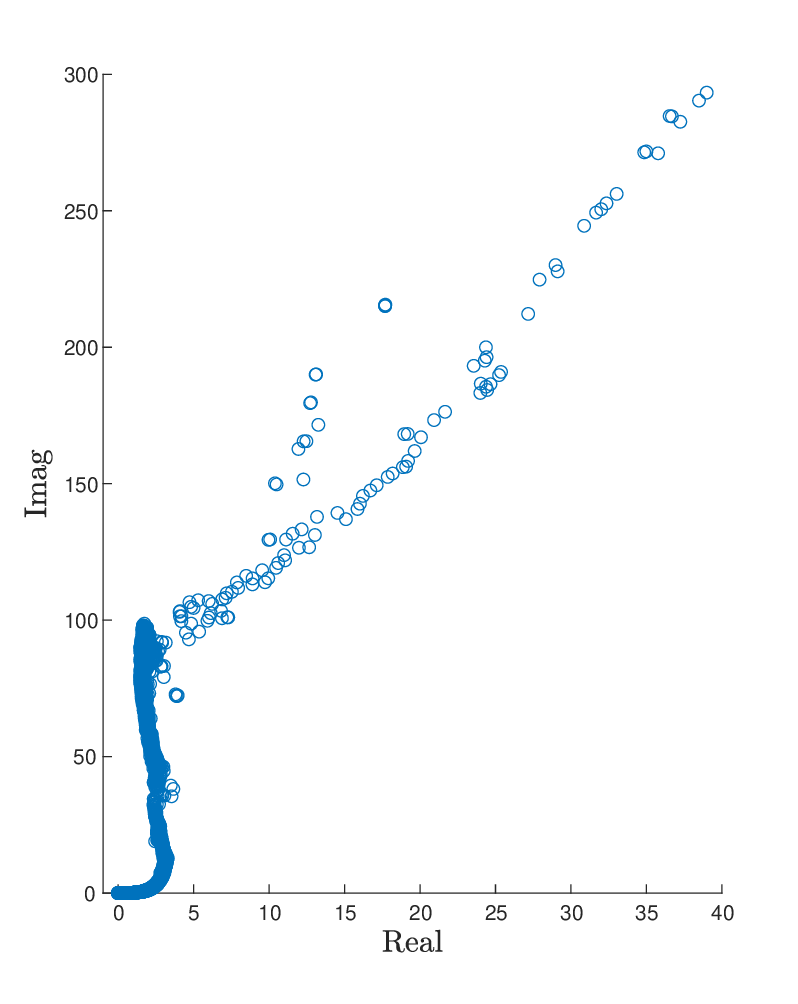}
  \hspace{30pt}
  \includegraphics[width=0.4\textwidth]{./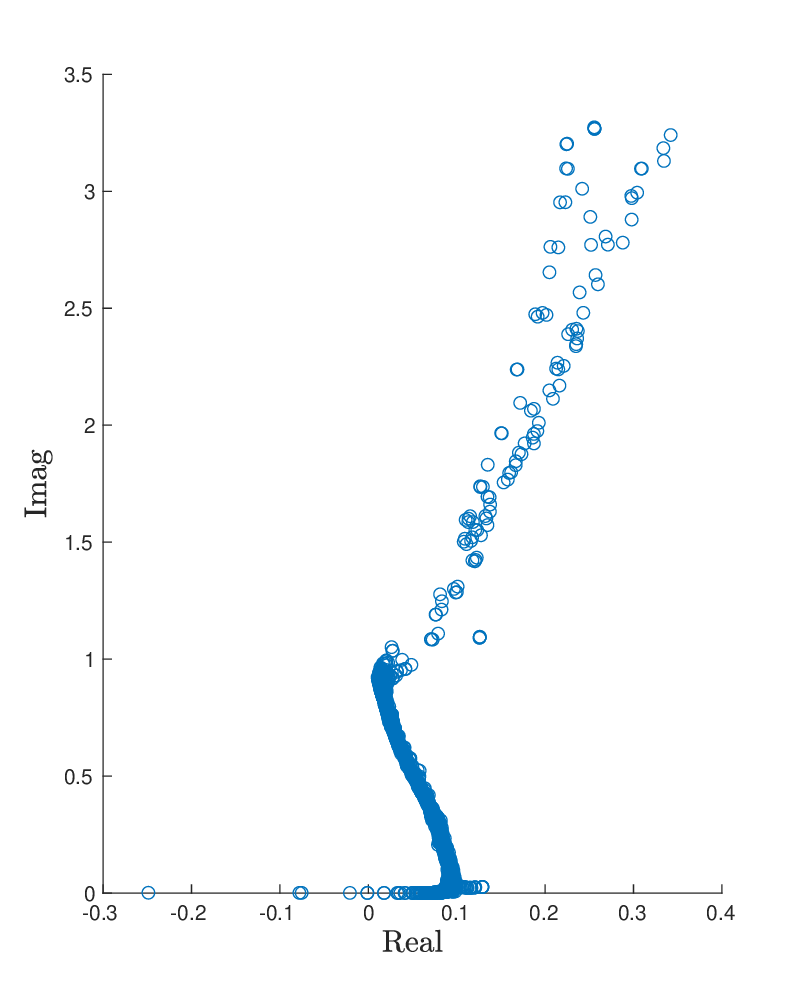}
  \caption{Eigenvalues for $\epsilon=0$. Left: $A_{\epsilon}$, right: $P^{-1}A_{\epsilon}$.}
  \label{tzz1}
\end{figure}

\begin{figure}[htbp]
  \centering
  \includegraphics[width=0.4\textwidth]{./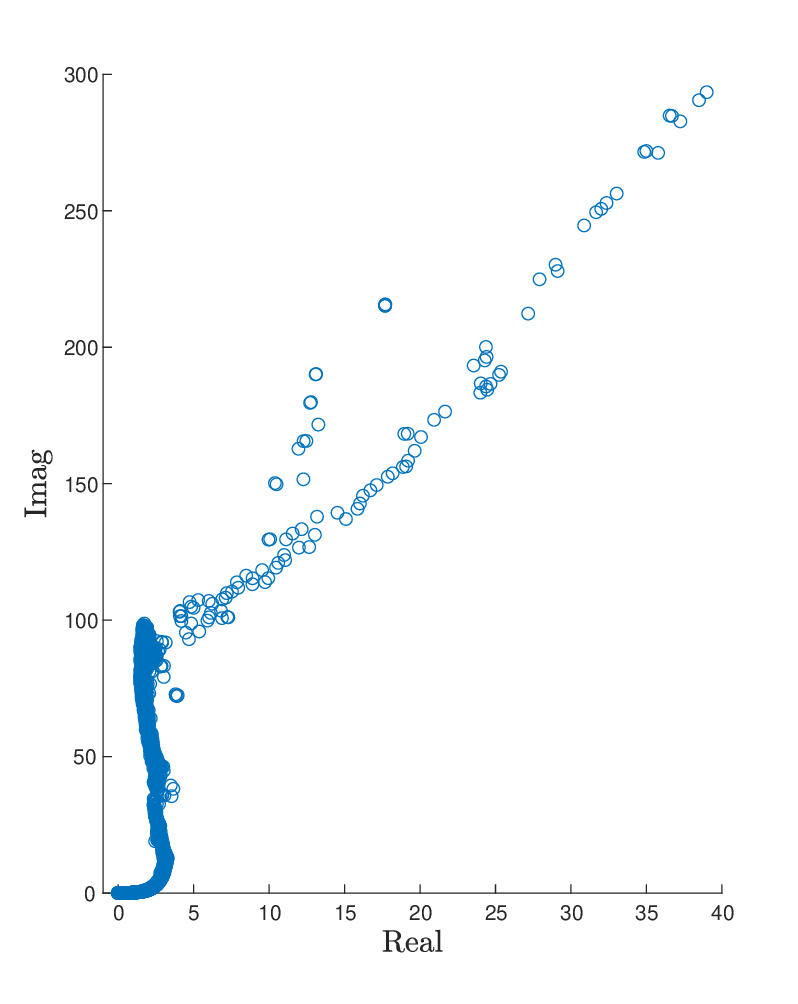}
  \hspace{30pt}
  \includegraphics[width=0.4\textwidth]{./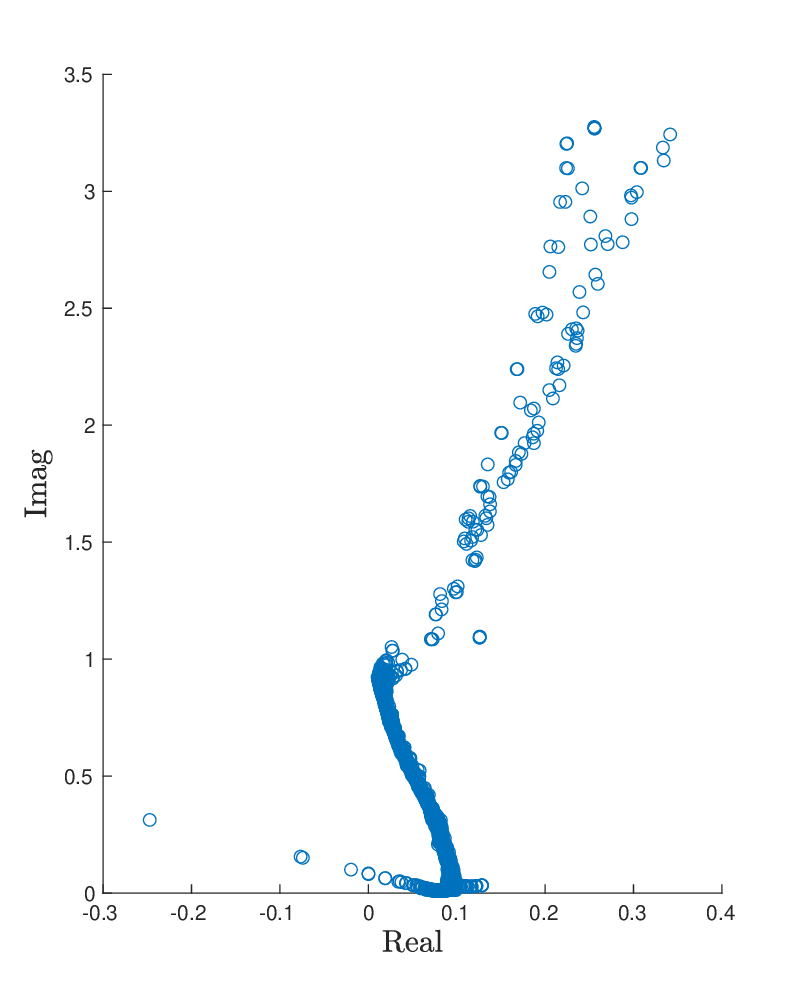}
  \caption{Eigenvalues for $\epsilon=k^2$. Left: $A_{\epsilon}$, right: $P^{-1}A_{\epsilon}$.}
  \label{tzz2}
\end{figure}

\par In each Krylov iteration, we need to compute $P^{-1} \bm{x}$, i.e., solve a linear system $P \bm{y} = \bm{z}$. We propose a geometric multigrid method for this purpose. Given a sequence of nested meshes $\mc{T}_1, \mc{T}_2, \ldots, \mc{T}_r$, let $U_k^0$ be the piecewise constant space on $\mc{T}_k$, and $P_k$ the discretization of $a_h^0(\cdot, \cdot)$ on $U_k^0 \times U_k^0$. Then
\begin{displaymath}
  U_1^0 \subset U_2^0 \subset U_3^0 \subset \cdots \subset U_r^0.
\end{displaymath}
Define the prolongation and restriction operators as
\begin{displaymath}
  \begin{aligned}
    I_{k}^{k+1} &: U_{k}^0 \rightarrow U_{k+1}^0, \quad I_{k}^{k+1} v_h = v_h, \\
    I_{k+1}^k   &: U_{k+1}^0 \rightarrow U_{k}^0, \quad I_{k+1}^k = (I_{k}^{k+1})^T.
  \end{aligned}
\end{displaymath}
Numerical experiments confirm that this geometric multigrid solver performs well for the system $P \bm{y} = \bm{z}$.


\section{Numerical Results}
\label{sec_numericalresults}
In this section, we perform numerical experiments
 to test the performance of the proposed method.
We first examine the high-order convergence of the RDA space $U_h^m$, 
and then evaluate the performance of the preconditioner.

\begin{figure}
  \centering
  \includegraphics[width=0.4\textwidth]{./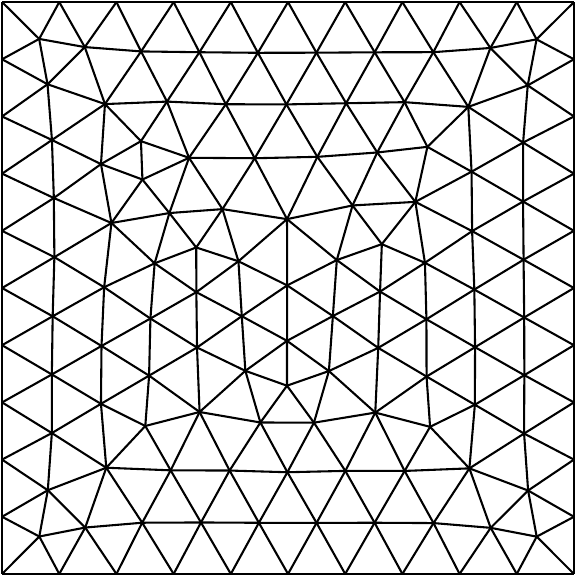}
  \hspace{25pt}
  \includegraphics[width=0.4\textwidth]{./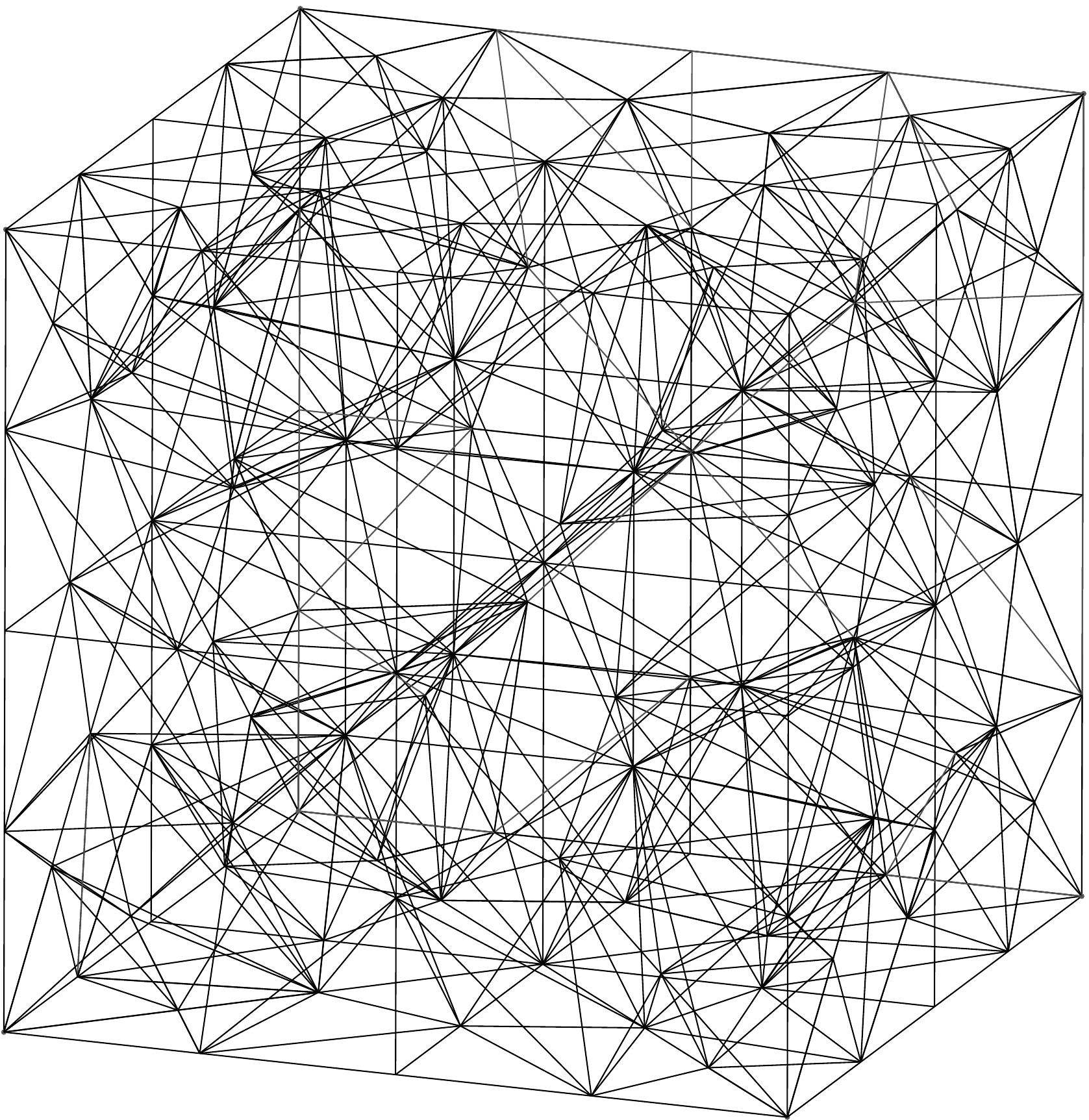}
  \caption{2D triangular partition with $ h = 1/10$ (left) and 3D tetrahedral partition with $h = 1/4$ (right).}
  \label{fig_partition}
\end{figure}

\begin{table}[htp]
\begin{minipage}[t]{0.3\textwidth}
  \centering
  \begin{tabular}{p{0.6cm}|p{0.6cm}|p{0.6cm}|p{0.6cm}|p{0.6cm}|p{0.6cm}}
  \hline\hline
  $m$    & 2 & 3 & 4 & 5 & 6  \\ \hline
  $\# S$ & 9 & 16 & 21 & 29 & 38 \\ 
  \hline\hline
  \end{tabular}
  \vspace{0.5cm}
  \label{tab_patch_2d}
\end{minipage}
\hspace{2cm}
\begin{minipage}[t]{0.3\textwidth}
  \centering
  \begin{tabular}{p{0.6cm}|p{0.6cm}|p{0.6cm}|p{0.6cm}}
    \hline\hline
   $m$    & 2 & 3 & 4  \\ \hline
   $\# S$ & 13 & 28 & 40  \\ 
    \hline\hline
  \end{tabular}
  \vspace{0.5cm}
  \label{tab_patch_3d}
\end{minipage}
\caption{The $\# S$ used in 2D and 3D examples.} 
\end{table}

\noindent \textbf{Example 1}
We first solve the pure Helmholtz problem on the square domain 
$\Omega = (0,1)^2$, choosing the exact solution as
\begin{displaymath}
  u(x,y) = e^{i k (x \cos{\frac{\pi}{5}} + y \sin{\frac{\pi}{5}})}.
\end{displaymath}
We test the convergence under the norms $\DGnorm{\cdot}$ and 
$\| \cdot \|_{L^2(\Omega)}$
on a series of uniformly refined meshes with $h = 1/10, 1/20, 
\ldots, 1/160$. We consider wavenumbers $k=5, 10, 20$ and plot the 
log-log errors in Figure~\ref{ch5_1}, Figure~\ref{ch5_2}, and Figure~\ref{ch5_3}.
In all experiments, we observe optimal convergence rates for the errors under both norms,
which agrees with our theoretical analysis.

\begin{figure}[htbp]
  \centering
  \includegraphics[width=0.45\textwidth]{./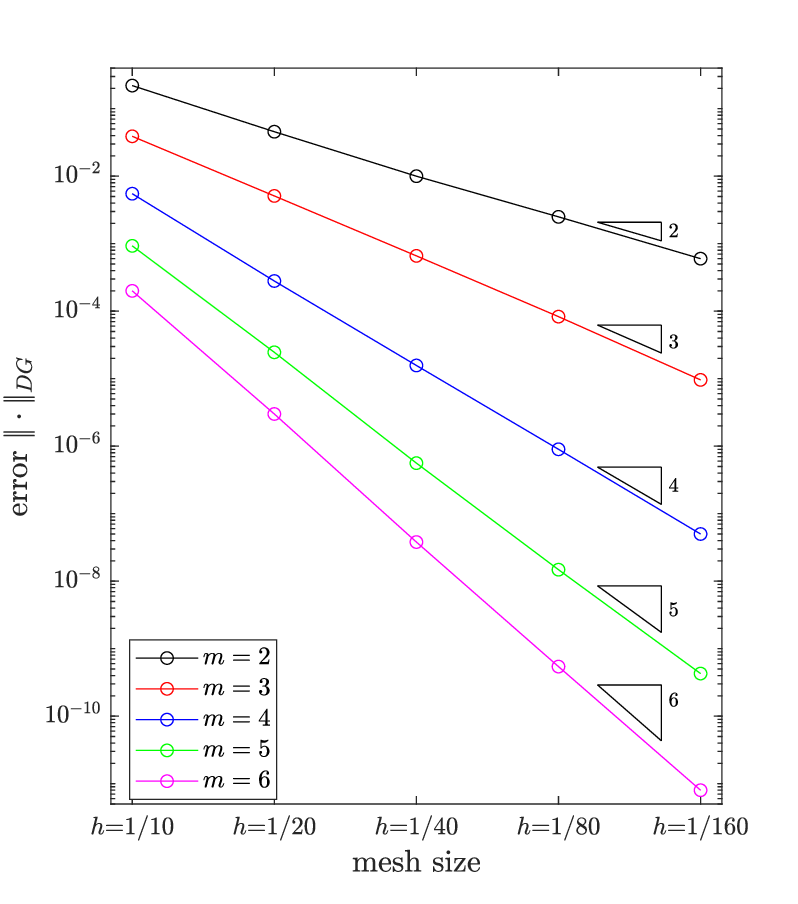}
  \hspace{30pt}
  \includegraphics[width=0.45\textwidth]{./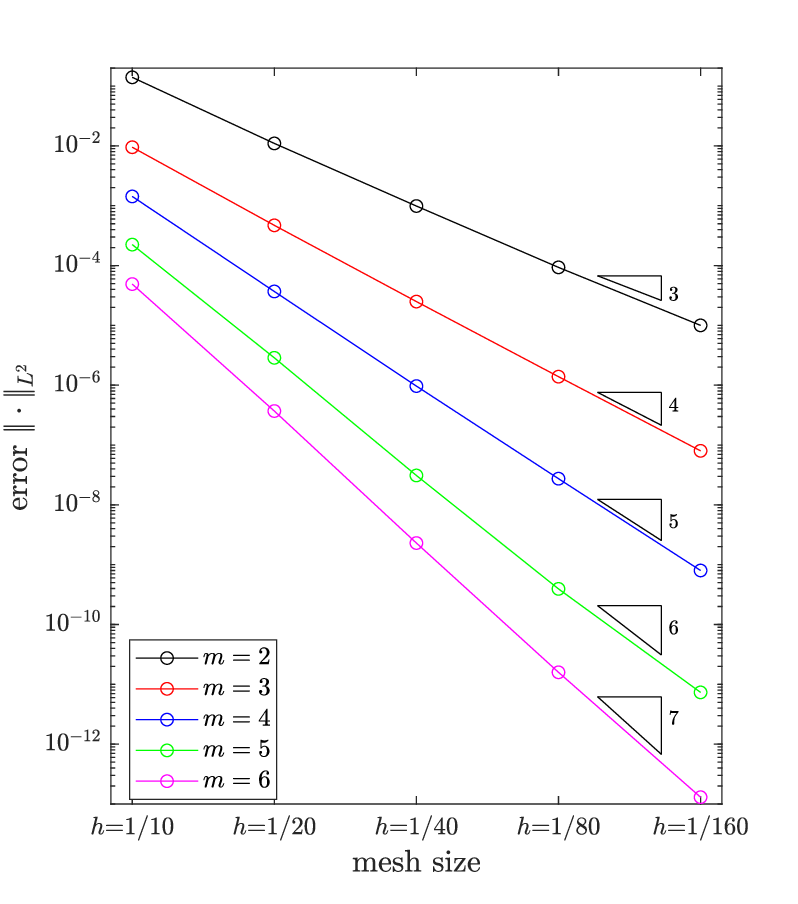}
    \caption{2D accuracy test, $k=5$.}
    \label{ch5_1}
\end{figure}

\begin{figure}[htbp]
  \centering
  \includegraphics[width=0.45\textwidth]{./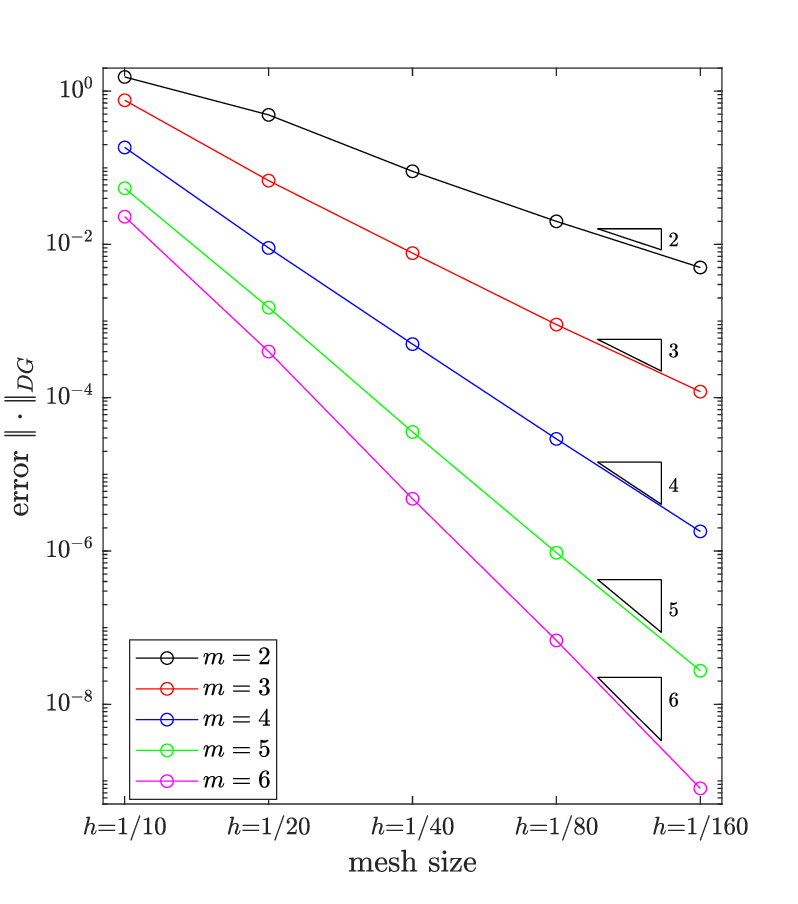}
  \hspace{30pt}
  \includegraphics[width=0.45\textwidth]{./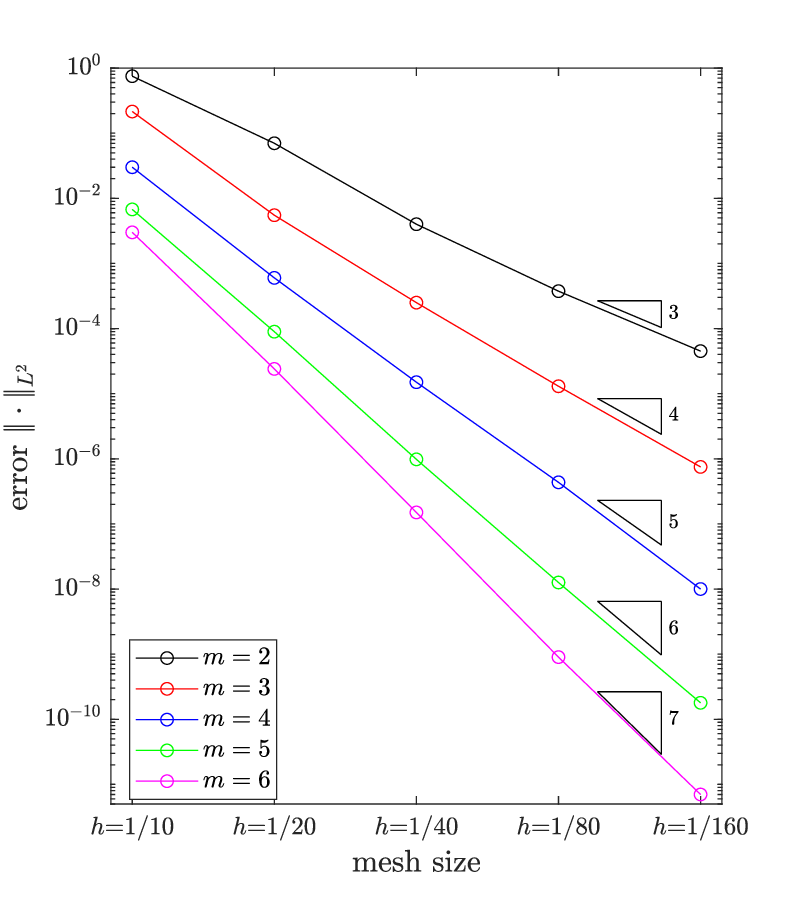}
    \caption{2D accuracy test, $k=10$.}
    \label{ch5_2}
\end{figure}

\begin{figure}[htbp]
  \centering
  \includegraphics[width=0.45\textwidth]{./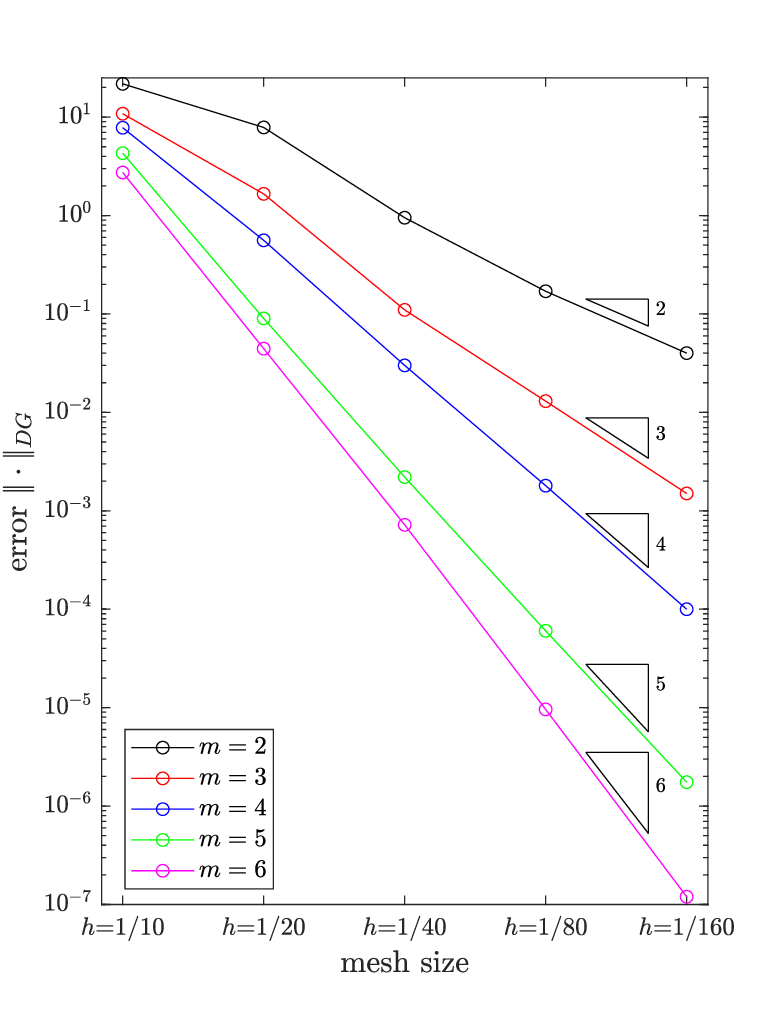}
  \hspace{30pt}
  \includegraphics[width=0.46\textwidth]{./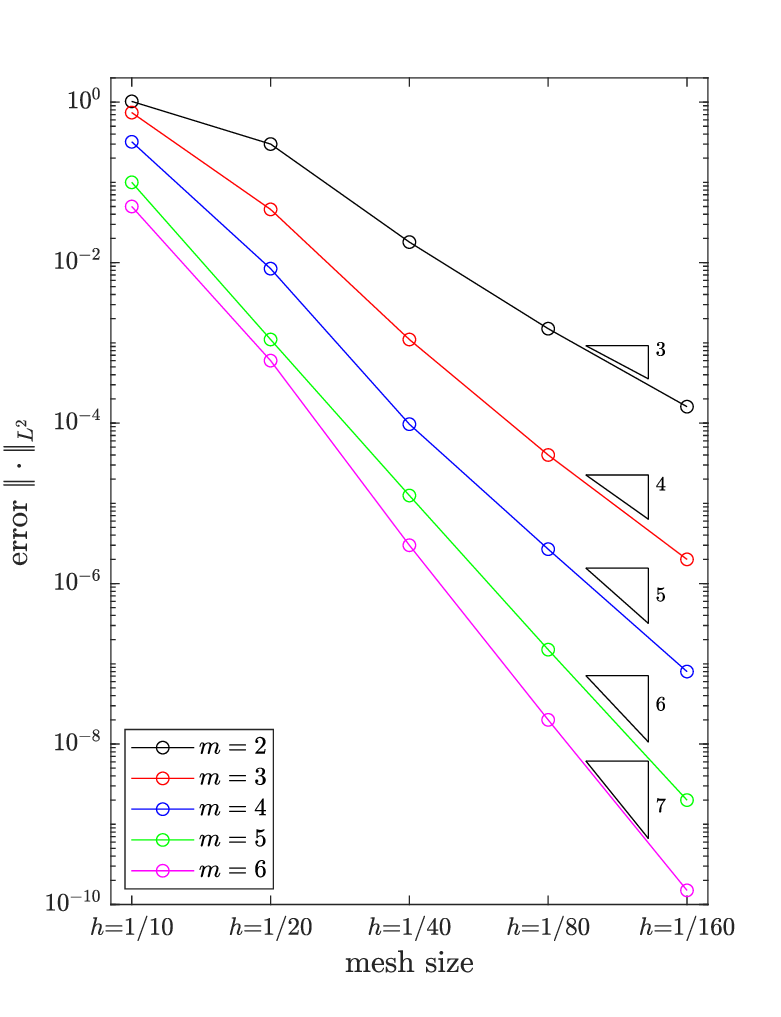}
    \caption{2D accuracy test, $k=20$.}
      \label{ch5_3}
\end{figure}

\noindent \textbf{Example 2}
We conduct a comprehensive comparative study between our proposed method
and the conventional DG formulation for the Helmholtz problem.
Following \cite{hughes2000comparison}, the number of degrees of freedom
in the discretized system serves as an appropriate metric for 
evaluating numerical efficiency. Our analysis examines the relative 
performance of both methodologies across a sequence of refined meshes 
for the case $k=20$, with polynomial orders ranging from $2 \le m \le 6$.
Table~\ref{tab_ex1_error} presents the relative $L^2$ error ratios
between the RDA and DG approaches when employing identical \# DOF, 
clearly demonstrating the superior efficiency of the RDA method.
Figure~\ref{effi_compare} illustrates the $L^2$ error as a function of
both the \# DOF and the number of non-zero matrix entries. 
These results indicate that our RDA scheme achieves enhanced 
computational efficiency relative to the \# DOF compared to standard
DG implementations. Furthermore, for approximation orders $m \ge 3$, 
the RDA method requires fewer non-zero matrix elements than the DG 
counterpart to attain comparable $L^2$ accuracy. A more quantitative 
analysis is provided in Table~\ref{tab_ex1_dofs}, which details the 
relative ratios of \# DOF and non-zero matrix entries required by the 
RDA method to achieve $L^2$ errors comparable to the DG method.

\begin{figure}[htbp]
  \centering
  \includegraphics[width=0.45\textwidth]{./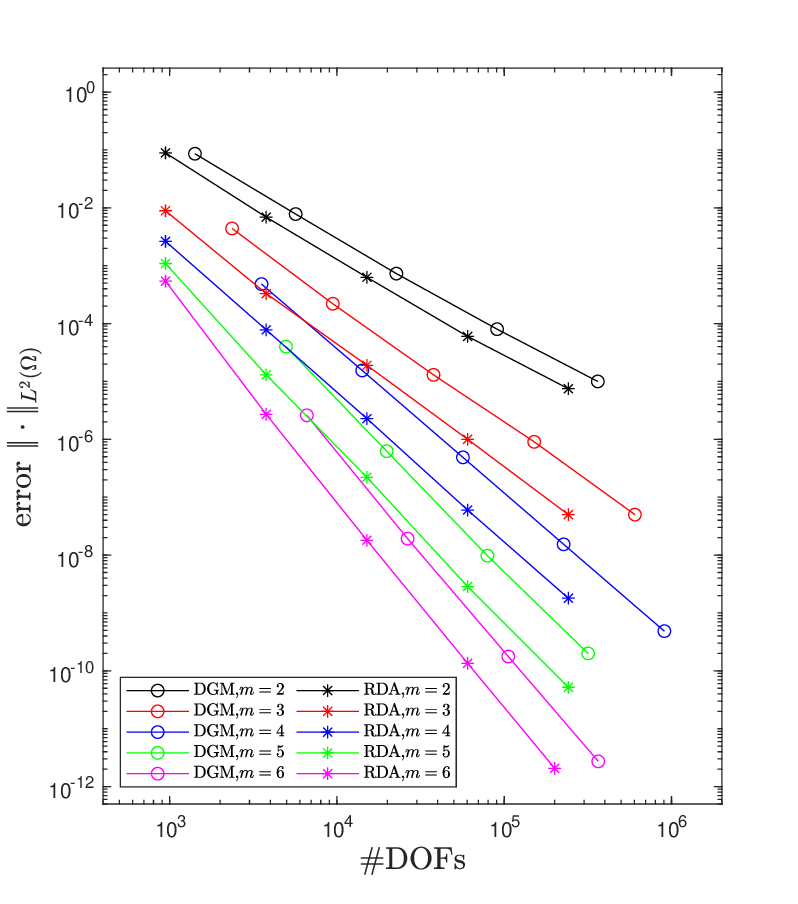}
  \hspace{30pt}
  \includegraphics[width=0.45\textwidth]{./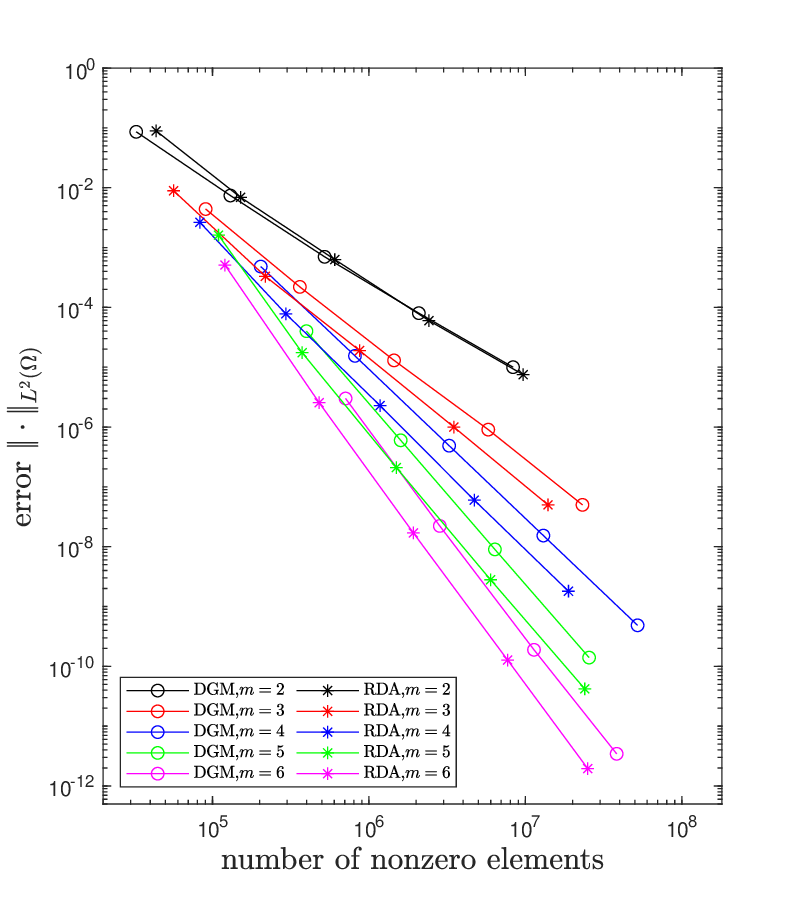}
  \caption{Computational efficiency comparison in two dimensions.}
   \label{effi_compare}
\end{figure}

\begin{table}[htbp]
  \centering
  \renewcommand{\arraystretch}{1.3}
  \begin{tabular}{l|c|c|c|c|c}
    \hline\hline
    $m$  & 2 & 3 & 4 & 5 & 6 \\ 
    \hline
    Relative $L^2$ error (RDA/DG) & 0.527 & 0.303 & 0.167 & 0.105 & 0.070  \\ 
    \hline\hline
  \end{tabular}
  \caption{Relative $L^2$ error of the RDA method compared to the DG method using identical \# DOF (Example 2).}
  \label{tab_ex1_error}
\end{table}

\begin{table}[htbp]
  \centering
  \renewcommand{\arraystretch}{1.3}
  \begin{tabular}{l|c|c|c|c|c}
    \hline\hline
    $m$  & 2 & 3 & 4 & 5 & 6 \\ 
    \hline
    \# DOF ratio (RDA/DG) & 65.6\% & 48.1\% & 38.9\% & 36.2\% & 34.3\% \\ 
    \hline
    Non-zero entries ratio (RDA/DG) & 95.5\% & 80.8\% & 70.2\% & 63.6\% & 59.8\% \\ 
    \hline\hline
  \end{tabular}
  \caption{Computational cost comparison for achieving comparable $L^2$ accuracy (Example 2).}
  \label{tab_ex1_dofs}
\end{table}

\par \textbf{Example 3}
Following the methodology outlined in the preceding section,
we implement the PGMRES iterative solver with $P^{-1}$
serving as the preconditioner.
Numerical experiments are conducted for both 
parameter configurations $\epsilon=0$ and $\epsilon=k^2$.
The solution of the auxiliary system $P\mathbf{y} = \mathbf{z}$
within each Krylov subspace iteration is computed using the 
geometric multigrid solver described previously.
Iteration counts required for convergence across varying 
wavenumbers $k$ are compiled in Tables~\ref{ch5_01}, \ref{ch5_02}, and \ref{ch5_03}.
The results demonstrate that employing $P^{-1}$ as a preconditioner
yields GMRES iteration counts that appear to approach a uniform upper
bound as the mesh parameter $h$ approaches zero.

\begin{table}[htbp]
    \centering
    \renewcommand{\arraystretch}{1.3}
    \begin{tabular}{c|c|c|c|c|c}
      \hline\hline
        \diagbox[width=1.75cm]{$m$}{$1/h$}  & 10 & 20 & 40 & 80 & 160 \\
      \hline
       2 & 48/45 & 46/42 & 45/44 & 44/40 & 41/39  \\
      \hline
       3 & 55/52 & 47/47 & 46/45 & 47/44 & 49/46 \\
      \hline
       4 & 50/49 & 60/58 & 61/59 & 60/56 & 56/52 \\
      \hline
       5 & 56/55 & 74/72 & 82/80 & 83/80 & 83/79 \\
      \hline
       6 & 81/79 & 88/84 & 100/95 & 114/105 & 125/118 \\       
      \hline\hline
    \end{tabular}
    \caption{Iteration counts for two-dimensional problem with $k=5$ .}
    \label{ch5_01}
\end{table}

\begin{table}[htbp]
    \centering
    \renewcommand{\arraystretch}{1.3}
    \begin{tabular}{c|c|c|c|c|c}
      \hline\hline
        \diagbox[width=1.75cm]{$m$}{$1/h$} & 10 & 20 & 40 & 80 & 160\\
      \hline
       2 & 67/58 & 76/62 & 77/65 & 77/59 & 75/57 \\
      \hline
       3 & 78/66 & 77/70 & 77/68 & 83/67 & 84/69 \\
      \hline
       4 & 74/68 & 89/79 & 99/87 & 100/85 & 100/82 \\
      \hline
       5 & 79/72 & 102/92 & 118/105 & 134/113 & 137/115 \\
      \hline
       6 & 103/92 & 122/109 & 142/130 & 221/205 & 262/231 \\    
      \hline\hline
    \end{tabular}
    \caption{Iteration counts for two-dimensional problem with $k=10$ .}
    \label{ch5_02}
\end{table}

\begin{table}[htbp]
    \centering
    \renewcommand{\arraystretch}{1.3}
    \begin{tabular}{c|c|c|c|c|c}
      \hline\hline
        \diagbox[width=1.75cm]{$m$}{$1/h$}  & 10 & 20 & 40 & 80 & 160 \\
      \hline
       2 & 115/75 & 178/83 & 220/92 & 256/104 & 264/109 \\
      \hline
       3 & 180/82 & 205/89 & 216/95 & 266/110 & 273/126 \\
      \hline
       4 & 174/84 & 216/101 & 275/124 & 343/142 & 357/150  \\
      \hline
       5 & 178/96 & 248/125 & 340/138 & 446/180 & 506/190  \\
      \hline
       6 & 148/128 & 294/145 & 443/212 & 681/262 & 874/297  \\
      \hline\hline
    \end{tabular}
    \caption{Iteration counts for two-dimensional problem with $k=20$ .}
    \label{ch5_03}
\end{table}
To further demonstrate the computational efficiency of the proposed 
$P^{-1}$+GMG preconditioner, in Table \ref{RDA_time} we present execution 
times for the GMRES method applied to the Helmholtz problem with $k = 20$.
Compared to the widely used BoomerAMG preconditioner from Hypre, our
$P^{-1}$+GMG method demonstrates superior performance in terms of 
both iteration counts and computational time, particularly as the 
mesh is refined and for higher values of $m$. The results
consistently show that $P^{-1}$+GMG requires fewer iterations and 
significantly less CPU time across all tested configurations, with
the performance gap widening as $h$ decreases. 
These findings highlight the effectiveness of our preconditioner in handling
Helmholtz problems with discontinuous Galerkin discretizations.
The time cost for the solver of traditional discontinuous Galerkin method
is also provided here for reference in Table \ref{DG_time}.

\begin{table}[htbp]
    \centering
    \renewcommand{\arraystretch}{1.5}
    \begin{tabular}{c|c|c|c|c|c|c}
        \hline\hline
         $m$ &  \diagbox[width=2.5cm]{$m$}{$1/h$} & 10 & 20 & 40 & 80 & 160 \\
        \cline{3-7}
        \hline
        \multirow{2}{*}{2} & $P^{-1}$+GMG & 0.030 (115) & 0.115 (178) & 0.801 (220) & 3.783 (256) & 19.617 (264) \\
        \cline{2-7}
        & BoomerAMG & 0.024 (67) & 0.265 (170) & 1.945 (293) & 13.119 (431) & 86.027 (606) \\
        \hline
        \multirow{2}{*}{3} & $P^{-1}$+GMG & 0.035 (180) & 0.162 (205) & 0.944 (216) & 4.598 (266) & 22.951 (273) \\
        \cline{2-7}
        & BoomerAMG & 0.049 (91) & 0.518 (207) & 3.412 (320) & 22.568 (463) & 141.721 (620) \\
        \hline
        \multirow{2}{*}{4} & $P^{-1}$+GMG & 0.040 (174) & 0.266 (216) & 1.513 (275) & 7.987 (343) & 39.311 (357) \\
        \cline{2-7}
        & BoomerAMG & 0.079 (118) & 0.785 (208) & 5.296 (387) & 39.943 (566) & 236.073 (680) \\
        \hline
        \multirow{2}{*}{5} & $P^{-1}$+GMG & 0.072 (178) & 0.343 (248) & 1.925 (340) & 11.802 (446) & 57.160 (506) \\
        \cline{2-7}
        & BoomerAMG & 0.114 (122) & 1.215 (263) & 7.946 (388) & 58.290 (585) & 298.676 (691) \\
        \hline
        \multirow{2}{*}{6} & $P^{-1}$+GMG & 0.128 (148) & 0.700 (294) & 2.947 (443) & 21.437 (681) & 119.969 (874) \\
        \cline{2-7}
        & BoomerAMG & 0.194 (190) & 1.993 (389) & 15.980 (610) & 166.05 (1085) & >300  \\
        \hline\hline
    \end{tabular}
    \caption{CPU time (seconds) and iteration counts (in parentheses) for GMRES with different preconditioners.}
       \label{RDA_time}
\end{table}
\begin{table}[htbp]
    \centering
    \renewcommand{\arraystretch}{1.5}
    \begin{tabular}{c|c|c|c|c}
      \hline\hline
        \diagbox[width=1.75cm]{$m$}{$1/h$}  & 10 & 20 & 40 & 80  \\
      \hline
       2 & 0.41 (176) & 2.62 (210) & 13.90 (282) & 93.44 (383)  \\
      \hline
       3 & 1.08 (157) & 5.52 (237) & 35.5 (313) & 195.00 (420) \\
      \hline
       4 & 2.23 (198) & 13.41 (272) & 79.46 (350) & >300   \\
      \hline
       5 & 5.52 (216) & 36.74 (269) & 196.03 (416) & >300    \\
      \hline
       6 & 12.66 (368) & 84.45 (456) & >300  & > 300    \\    
      \hline\hline
    \end{tabular}
    \caption{CPU time cost for DG method with BoomerAMG preconditioner.}
    \label{DG_time}
  \end{table}

\par \textbf{Example 4}
This example demonstrates the capability of our method to handle
large wavenumber problems. We solve
the pure Helmholtz problem with the analytical solution given by
    \begin{displaymath}
  u(x,y) = \frac{\cos(kr)}{k} - \frac{\cos k + i \sin k}{k (J_0(k) + i 
  J_1(k))} J_0(kr),\quad r = \sqrt{(x-0.5)^2+(y-0.5)^2},
\end{displaymath}   
on the unit square domain $[0, 1]^2$ with wavenumber $k=100$.
Numerical simulations are performed on an unstructured mesh 
with $h=1/640$ using reconstruction order $m=2$. Under these conditions,
the PGMRES algorithm converges in 2,936 iterations with a total 
execution time of 5,808 seconds. 
Figure~\ref{sol} provides a comparative visualization of 
the numerical solution alongside the exact solution.

\begin{figure}[htbp]
  \centering
  \includegraphics[width=0.45\textwidth]{./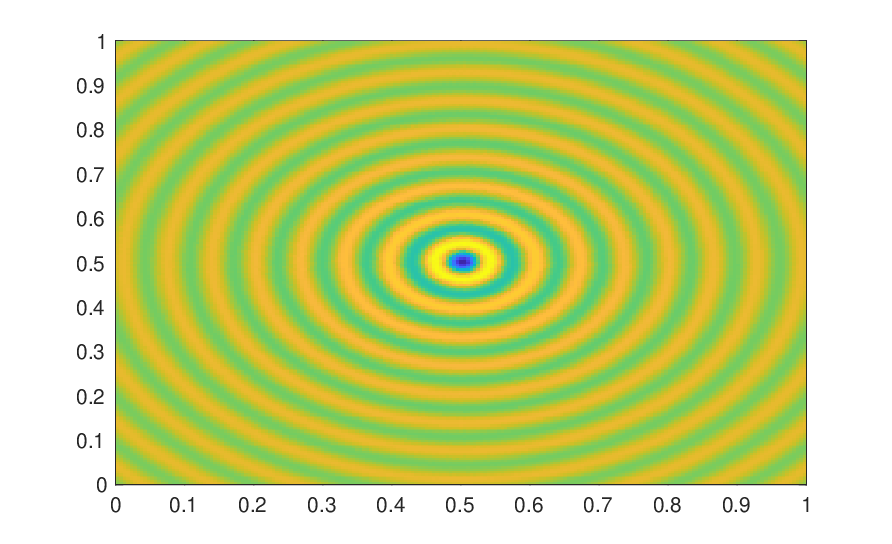}
  \hspace{30pt}
  \includegraphics[width=0.45\textwidth]{./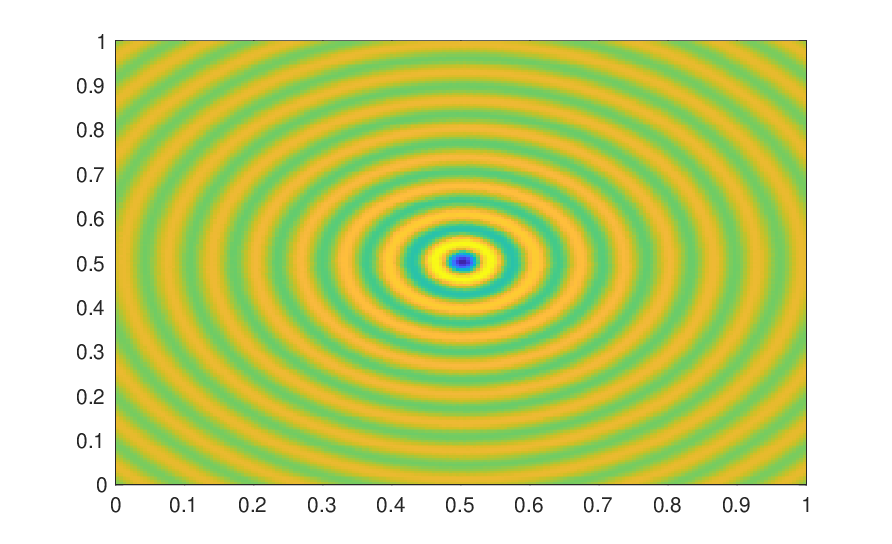}
  \caption{Numerical solution (left) and exact solution (right).}
   \label{sol}
\end{figure}

\noindent \textbf{Example 5}
In this final example, we solve a three-dimensional Helmholtz problem
defined on the cubic domain $\Omega = (0,1)^3$, with the analytical solution specified as
\begin{displaymath}
u(x,y,z) = e^{i k (x \sin \theta \cos \phi + y \sin \theta \sin \phi + z \cos \theta)},
\end{displaymath}
where the directional parameters are set to $\theta = \frac{\pi}{4}$
and $\phi = \frac{\pi}{5}$. Numerical tests are conducted on 
sequences of quasi-uniform tetrahedral meshes with $h=1/8, 1/16, 1/32, 1/64$
for wavenumber $k=8$. Figure~\ref{ch5_4} displays the convergence 
behavior under both $L^2$ and energy norms, 
confirming the theoretical convergence rates.

Following the methodology of Example 2, we extend our analysis to
include a comprehensive efficiency comparison in three dimensions.
This comparison examines the trade-off between computational cost
and numerical accuracy, with detailed results summarized in 
Table~\ref{tab_ex5_error} and Table~\ref{tab_ex5_dofs}. 
The comparative data is graphically illustrated in Figure~\ref{effi_compare_2},
demonstrating that the RDA method requires significantly fewer 
\# DOF and non-zero matrix elements than the traditional DG approach,
particularly for higher approximation orders.

The performance of the preconditioner $P^{-1}$ in three dimensions
is documented in Table~\ref{ch5_04}. Notably, the iteration counts 
remain uniformly bounded with respect to mesh refinement for both 
parameter configurations ($\epsilon = 0$ and $\epsilon = k^2$),
underscoring the robustness and scalability of our preconditioning strategy.

\begin{figure}[htbp]
  \centering
  \includegraphics[width=0.45\textwidth]{./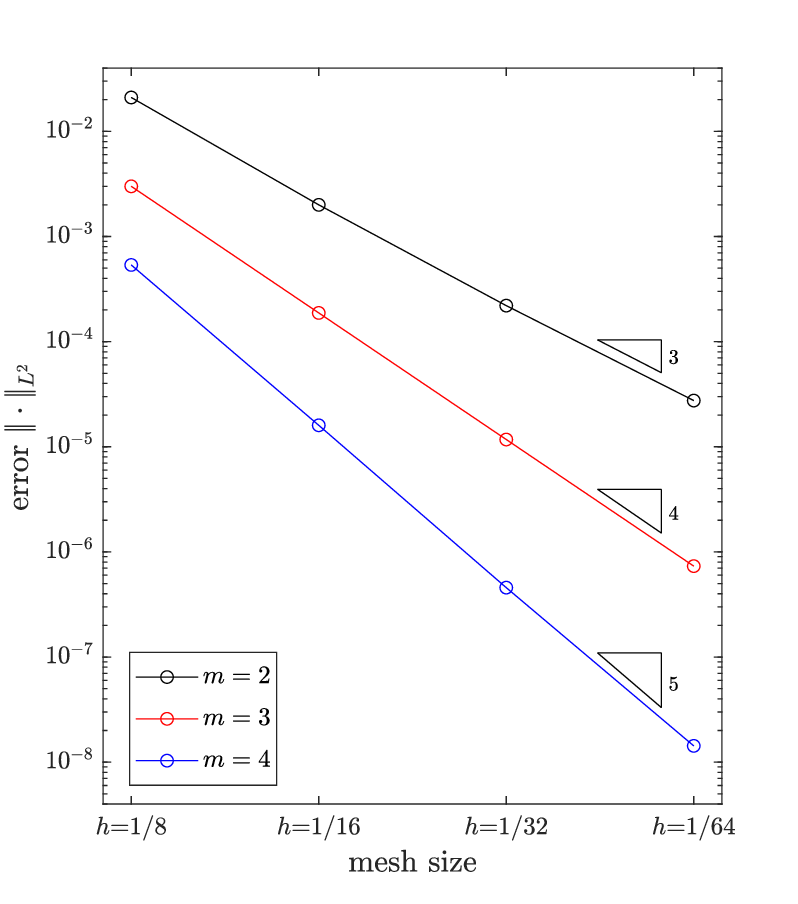}
  \hspace{30pt}
  \includegraphics[width=0.45\textwidth]{./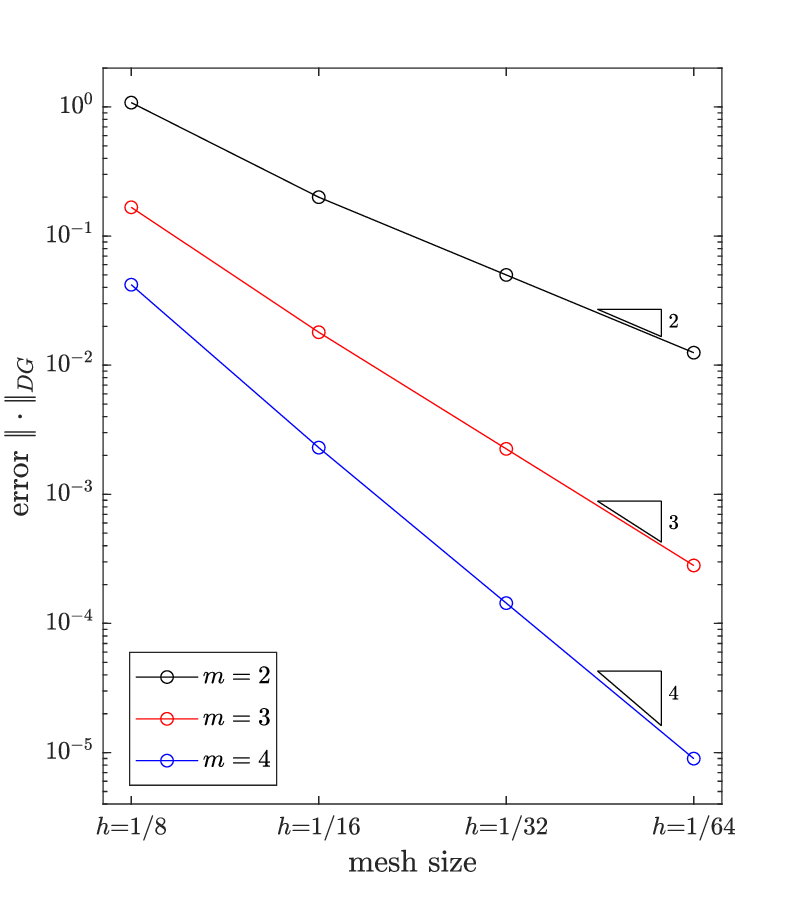}
    \caption{Three-dimensional accuracy test, $k=8$.}
      \label{ch5_4}
\end{figure}

\begin{figure}[htbp]
  \centering
  \includegraphics[width=0.45\textwidth]{./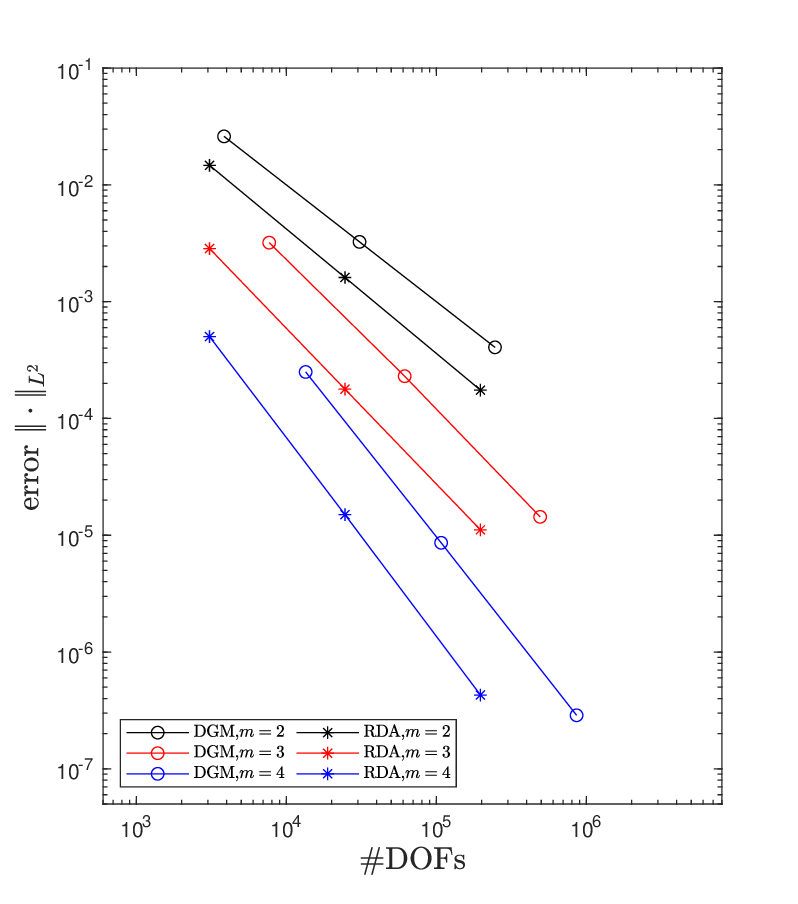}
  \hspace{30pt}
  \includegraphics[width=0.45\textwidth]{./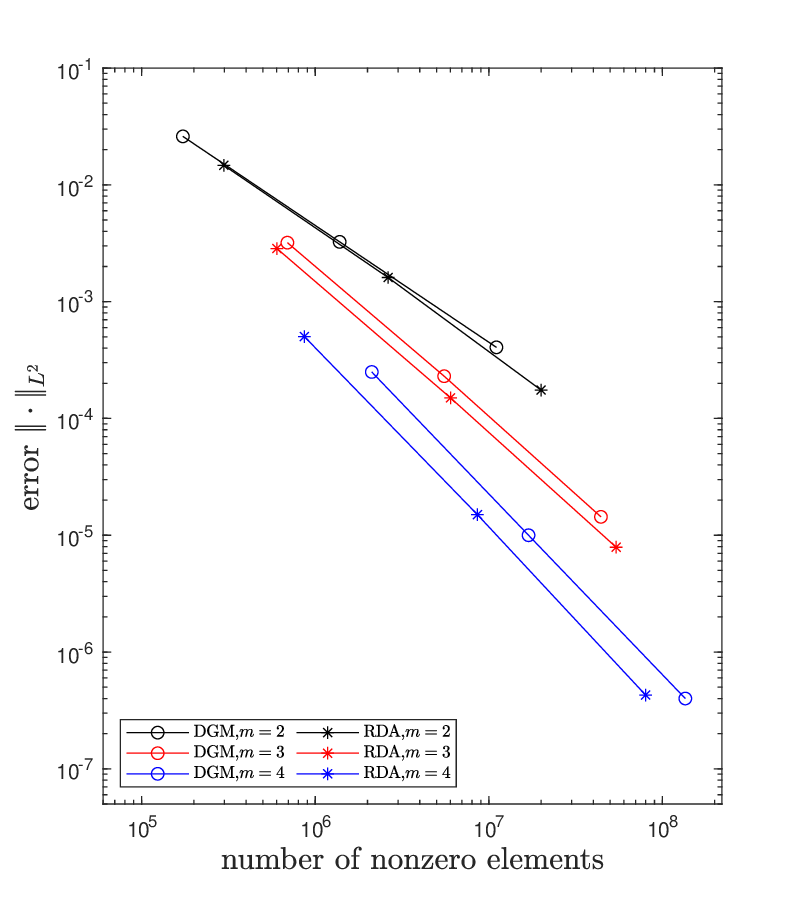}
  \caption{Computational efficiency comparison in three dimensions.}
   \label{effi_compare_2}
\end{figure}

\begin{table}[htbp]
  \centering
  \renewcommand{\arraystretch}{1.3}
  \begin{tabular}{l|c|c|c}
    \hline\hline
    $m$  & 2 & 3 & 4 \\ 
    \hline
    Relative $L^2$ error (RDA/DG) & 0.501 & 0.245 & 0.084 \\ 
    \hline\hline
  \end{tabular}
  \caption{Relative $L^2$ error of the RDA method compared to the DG method using identical \# DOF (Example 5).}
  \label{tab_ex5_error}
\end{table}

\begin{table}[htbp]
  \centering
  \renewcommand{\arraystretch}{1.5}
  \begin{tabular}{l|c|c|c}
    \hline\hline
    $m$  & 2 & 3 & 4  \\ 
    \hline
    \# DOF ratio (RDA/DG) & 49.8\% & 32.6\% & 23.2\% \\ 
    \hline
    Non-zero entries ratio (RDA/DG) & 83.2\% & 68.1\% & 44.6\% \\ 
    \hline\hline
  \end{tabular}
  \caption{Computational cost comparison for achieving comparable $L^2$ accuracy in three dimensions (Example 5).}
  \label{tab_ex5_dofs}
\end{table}

\begin{table}[htbp]
    \centering
    \renewcommand{\arraystretch}{1.5}
  \begin{tabular}{l|c|c|c|c}
      \hline\hline
        \diagbox[width=1.85cm]{$m$}{$1/h$} & 8 & 16 & 32 & 64 \\
      \hline
       2 & 135/116 & 144/135 & 147/139 & 155/142 \\
      \hline
       3 & 128/105 & 148/112 &  164/153 & 177/160 \\
      \hline
       4 & 125/132 & 172/165 &  248/194 & 279/219 \\
      \hline\hline
    \end{tabular}
    \caption{Iteration counts for three-dimensional problem with $k=8$ .}
    \label{ch5_04}
  \end{table}

\section{Conclusion}
\label{sec_conclusion}

In this paper, we have developed and analyzed an efficient solver
for the Helmholtz equation based on a novel approximation space.
The proposed method demonstrates superior performance over traditional approaches,
achieving higher accuracy with reduced degrees of freedom 
and lower memory requirements. 
By incorporating a natural preconditioner through piecewise constant 
discretization, the method also significantly reduces computational time.
Numerical experiments in both 2D and 3D confirm its effectiveness.

\section*{Acknowledgements}
The authors would like to thank Professor Ruo Li from Peking University and Fanyi Yang from Sichuan University for their valuable suggestions and advice, which have significantly improved the quality of this manuscript. This research was supported by the High-performance Computing Platform of Peking University.

\bibliographystyle{amsplain}
\bibliography{../ref}

\end{document}